\newcommand{\C}{\mathbb{C}}
\newcommand{\D}{\mathbb{D}}
\newcommand{\N}{\mathbb{N}}
\newtheorem{theorem}{Theorem}
\theoremstyle{plain}
\newtheorem{definition}{Definition}
\newtheorem{lemma}{Lemma}
\newtheorem{proposition}{Proposition}
\newtheorem{remark}{Remark}
\numberwithin{equation}{section}
\begin{document}
\title[Uniqueness for totally geodesic hypersurfaces]{A Uniqueness Theorem for Holomorphic Mappings in the Disk Sharing Totally Geodesic Hypersurfaces}
\author[Huang]{Jiaxing Huang}
\address{College of Mathematics and Statistics, Shenzhen University, Guangdong, 518060, P. R. China}
\email{hjxmath@szu.edu.cn}
\author[Ng]{Tuen Wai Ng}
\address{Department of Mathematics, The University of Hong Kong, Pokfulam, Hong Kong}
\email{ntw@maths.hku.hk}
\subjclass[2000]{32H30, 32A22, 30D35}
\keywords{Second Main Theorem, Meromorphic connection, Totally geodesic hypersurfaces, Uniqueness theorem}%
\thanks{Jiaxing Huang was partially supported by a graduate studentship of HKU, the RGC grant (Grant No. 1731115) and the National Natural Science Foundation of China (Grant No. 11701382).  Tuen Wai Ng was partially supported by the RGC grant (Grant No. 1731115 and 17307420).}

\begin{abstract}In this paper, we prove a Second Main Theorem for holomorphic mappings in a disk whose image intersects some families of nonlinear hypersurfaces (totally geodesic hypersurfaces with respect to a meromorphic connection) in the complex projective space $\mathbb{P}^k$. This is a generalization of Cartan's Second Main Theorem. As a consequence, we establish a uniqueness theorem for holomorphic mappings which intersects $O(k^3)$ many totally geodesic hypersurfaces.
\end{abstract}
\maketitle

\section{Introduction and main results}\label{sec:1}
Let $f$ and $g$ be two nonconstant meromorphic functions defined on $\mathbb{C}$. We say that $f$ and $g$ share a complex number $b$ CM (IM) if $f-b$ and $g-b$ have the same zeros in $\mathbb{C}$, counting with the same multiplicities (ignoring multiplicities). In 1926, Nevanlinna proved a uniqueness theorem for meromorphic functions, based on his Second Main Theorem (SMT). This is the famous Nevanlinna Five Point Theorem, which says that any two nonconstant meromorphic functions $f$ and $g$ sharing five values IM must be identical.  As each meromorphic function could be seen as a holomorphic curve from $\C$ into the projective space $\mathbb{P}^1$, it is natural to extend Nevanlinna's uniqueness theorem to holomorphic curves in higher dimensional complex projective spaces. 

In 1974, Fujimoto \cite{Fuj75} showed that for two linearly non-degenerated meromorphic mappings $f$ and $g$ from $\mathbb{C}$ into $\mathbb{P}^k$, if they have the same inverse images of $3k+2$ hyperplanes counted with multiplicities in $\mathbb{P}^k$ in a general position, then $f\equiv g$. In 1983, Smiley \cite{Smil83} obtained the uniqueness theorem of the meromorphic mappings sharing $3k+2$ hyperplanes without counting multiplicity. Later on, in 2009, Chen and Yan \cite{CY09} reduced the number $3k+2$ to $2k+3$, which is the smallest number of sharing hyperplanes in the uniqueness theorems so far; this type of result was extended and developed by many authors, such as those of \cite{DQT12}, and others. For example, one may naturally consider the problem of sharing hypersurfaces. In 2008, Dulock and Ru \cite{DR08} investigated  for the first time the case of sharing hypersurfaces without counting multiplicities, and they proved the following theorem by using a result of An and Phuong in \cite{AP09}:

\begin{theorem}[Dulock-Ru \cite{DR08}]\label{thm:DMR}
	Let $\{Q_j\}_{j=1}^q $ be hypersurfaces of degree $d_j$ in $\mathbb{P}^k$ in a general position. Let $d_0=\min\{d_1, \dots, d_q\}$, $d=\mathrm{lcm}\{d_1, \dots, d_q\}$, and $M=2d[2^{k-1}(k+1)kd(d+1)]^k$. Suppose that $f$ and $g$ are algebraically non-degenerated meromorphic mappings of $\mathbb{C}$ into $\mathbb{P}^k$ such that
	$f(z)=g(z)$ for any $z\in\displaystyle\bigcup_{j=1}^q \left\{f^{-1}(Q_j)\cup g^{-1}(Q_j)\right\}$. If $q>(k+1)+\dfrac{2M}{d_0} +\dfrac{1}{2}$,
	then $f\equiv g$. Here $f^{-1}(Q_j)$ means the zero set of $Q_j\circ f$.
\end{theorem}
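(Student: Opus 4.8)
The plan is to argue by contradiction: assuming $f\not\equiv g$, I will show the hypotheses force $q\le (k+1)+\tfrac{2M}{d_0}+\tfrac{1}{2}$, contradicting the assumed lower bound on $q$. The first observation is that the sharing condition is stronger than it appears. If $z\in f^{-1}(Q_j)$ then $z$ lies in $\bigcup_{j}\{f^{-1}(Q_j)\cup g^{-1}(Q_j)\}$, so $f(z)=g(z)$ in $\mathbb{P}^k$, whence $Q_j(g(z))=Q_j(f(z))=0$ and $z\in g^{-1}(Q_j)$; by symmetry $f^{-1}(Q_j)=g^{-1}(Q_j)$ for every $j$, and in particular the level-one truncated counting functions coincide, $N^{[1]}_f(r,Q_j)=N^{[1]}_g(r,Q_j)$.

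The analytic input is the Second Main Theorem of An--Phuong \cite{AP09} for algebraically non-degenerate maps meeting hypersurfaces in general position, which furnishes an explicit truncation level --- the constant $M$ of the statement --- so that for every $\varepsilon>0$ and all $r$ outside a set of finite measure, $(q-(k+1)-\varepsilon)\,T_f(r)\le \sum_{j=1}^q \tfrac{1}{d_j}\,N^{[M]}_f(r,Q_j)+o(T_f(r))$, and likewise for $g$. With the elementary estimate $N^{[M]}\le M\,N^{[1]}$ and $d_j\ge d_0$, the right-hand side is bounded by $\tfrac{M}{d_0}\sum_{j=1}^q N^{[1]}_f(r,Q_j)$.

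To control $\sum_{j} N^{[1]}_f(r,Q_j)$ I would introduce an auxiliary function detecting where $f$ and $g$ coincide. Fixing reduced representations $f=[f_0:\cdots:f_k]$, $g=[g_0:\cdots:g_k]$, the assumption $f\not\equiv g$ gives indices $s,t$ with $\phi:=f_sg_t-f_tg_s\not\equiv 0$. At any point of $f^{-1}(Q_j)=g^{-1}(Q_j)$ the vectors $(f_0,\dots,f_k)$ and $(g_0,\dots,g_k)$ are proportional, so $\phi$ vanishes there; hence every common zero feeds into $N(r,1/\phi)$, and with the First Main Theorem and $T(r,\phi)\le T_f(r)+T_g(r)+O(1)$ one gets a bound of the form $\sum_{j=1}^q N^{[1]}_f(r,Q_j)\le C\,\big(T_f(r)+T_g(r)\big)+O(1)$.

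Substituting into the two Second Main Theorem inequalities, adding them, and using $N^{[1]}_f=N^{[1]}_g$ collapses everything to $(q-(k+1)-\varepsilon)\,(T_f(r)+T_g(r))\le \tfrac{2M}{d_0}\,(T_f(r)+T_g(r))+o(T_f(r)+T_g(r))$; dividing by $T_f(r)+T_g(r)$, letting $r\to\infty$ outside the exceptional set, and sending $\varepsilon\to0$ gives $q\le (k+1)+\tfrac{2M}{d_0}+\tfrac{1}{2}$, the contradiction. The step I expect to be the main obstacle is hidden in the constant $C$ of the previous paragraph: a naive count loses a factor equal to the maximal number of the $Q_j$ that can meet at one point (up to $k$ by general position), so obtaining precisely $\tfrac{2M}{d_0}$ requires exploiting the refined truncation built into \cite{AP09} to neutralize this overcounting, together with checking that the finite-measure exceptional sets for $f$ and $g$ may be discarded at once.
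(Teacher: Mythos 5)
Your overall strategy is the one the paper itself points to for this theorem: apply the An--Phuong Second Main Theorem to both $f$ and $g$, use the sharing hypothesis to get $f^{-1}(Q_j)=g^{-1}(Q_j)$, and control the truncated counting functions by an auxiliary entire function that vanishes wherever $f=g$. Your $\phi=f_sg_t-f_tg_s$ is in fact a concrete realization of Lemma \ref{lem:DMR}: it is (the pullback under $f\times g$ of) a section of $\pi_1^*H\otimes\pi_2^*H$ whose support contains the diagonal but not $(f\times g)(\mathbb{C})$, and your bound $N(r,1/\phi)\le T_f(r)+T_g(r)+O(1)$ is the same First Main Theorem estimate as in Proposition \ref{pro:1}. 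However, two steps do not work as written. The lesser one: you quote \cite{AP09} as giving a single truncation level $M$ valid ``for every $\varepsilon>0$'' and then let $\varepsilon\to 0$ at the end. That is not how the An--Phuong theorem is structured --- the explicit truncation level there depends on $\varepsilon$ and blows up as $\varepsilon\to 0$; the constant $M$ in the statement is the truncation attached to one fixed choice of $\varepsilon$ (in effect $\varepsilon=\tfrac12$, which is exactly where the $+\tfrac12$ in the hypothesis comes from). In your write-up the $+\tfrac12$ instead materializes in the last line with no source. This is repairable: fix $\varepsilon=\tfrac12$ once and never take a limit in $\varepsilon$.

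The genuine gap is the one you flag yourself: the constant $C$ in $\sum_j N^{[1]}_f(r,Q_j)\le C\,(T_f(r)+T_g(r))+O(1)$. The naive count gives $C=k$, because a point of the image may lie on up to $k$ of the $Q_j$ (general position only forbids $k+1$), while such a point contributes just $1$ to $N(r,1/\phi)$; with $C=k$ your chain of inequalities yields only $q\le (k+1)+2kM/d_0+\tfrac12$, which does not contradict the hypothesis $q>(k+1)+2M/d_0+\tfrac12$, so the contradiction argument collapses. Your proposed remedy --- ``exploiting the refined truncation built into \cite{AP09}'' --- cannot close this gap: the truncation level $M$ there controls the multiplicity of zeros of each individual $Q_j\circ f$ separately, and is completely insensitive to how many \emph{distinct} hypersurfaces pass through a common image point, which is what produces the factor $k$. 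For comparison, at the corresponding step the paper (following \cite{DR08}) compares the truncated sums directly with the counting function $n(r,S)$ of the point set $S$ and then uses Proposition \ref{pro:1} --- see the passage from $\sum_i\bigl(n_k(r,f^*(\sigma_i))+n_k(r,g^*(\sigma_i))\bigr)$ to $2k\,n(r,S)$ in the proof of Theorem \ref{thm:5}(\ref{thm:5a}), where the cross-hypersurface overcounting at points lying on several $\sigma_i$ is absorbed into the inequality $\sum_i n_1\le n(r,S)$ rather than addressed by truncation. Whatever one thinks of that step, your proposal neither proves such an inequality nor offers a workable substitute, so the proof is incomplete at precisely the decisive quantitative point.
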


The proof of Theorem \ref{thm:DMR} is based on a Second Main Theorem of An and Phuong \cite{AP09}, and also the following lemma of Dulock and Ru \cite{DR10}:
\begin{lemma}[\cite{DR10}]\label{lem:DMR}
	Let $H$ be a hyperplane line bundle on $\mathbb{P}^k$. For $m=1, 2$, we let $\pi_m:\mathbb{P}^k\times\mathbb{P}^k\rightarrow\mathbb{P}^k$ be the canonical projection mappings.
	Let $f\times g:\mathbb{C}\rightarrow\mathbb{P}^k\times\mathbb{P}^k$ be a holomorphic map such that $f\not\equiv g$, there exists a section $s$ of $H':=\pi_1^*H\otimes\pi_2^*H$ so that the diagonal $\Delta$ of $\mathbb{P}^k\times\mathbb{P}^k$ is contained in $\mathrm{Supp}(s)$, but the image $(f\times g)(\mathbb{C})$ is not contained in $\mathrm{Supp}(s)$.
\end{lemma}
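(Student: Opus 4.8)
The plan is to identify global sections of $H'=\pi_1^* H\otimes\pi_2^* H$ with bilinear forms and then to write down the required section explicitly. Fix homogeneous coordinates $[x_0:\cdots:x_k]$ on the first copy of $\mathbb{P}^k$ and $[y_0:\cdots:y_k]$ on the second. Since $H$ is the hyperplane bundle, a global section of $H'$ is precisely a bihomogeneous polynomial of bidegree $(1,1)$, that is, a bilinear form
\[
s(x,y)=\sum_{i,j=0}^k a_{ij}\,x_i y_j .
\]
The first step is to determine which such sections vanish along the diagonal. Restricting to $\Delta$ gives the quadratic form $s(x,x)=\sum_{i,j}a_{ij}x_ix_j$, which vanishes identically exactly when $a_{ij}+a_{ji}=0$ for all $i,j$; thus the sections whose support contains $\Delta$ are precisely the antisymmetric forms, and these are spanned by the elementary sections
\[
s_{ij}(x,y)=x_i y_j-x_j y_i,\qquad 0\le i<j\le k,
\]
each of which satisfies $\Delta\subseteq\mathrm{Supp}(s_{ij})$ by construction.

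Next I would use $f\not\equiv g$ to single out one of these $s_{ij}$. Choose reduced representations $f=[f_0:\cdots:f_k]$ and $g=[g_0:\cdots:g_k]$ by entire functions without common zeros, so that evaluating along $f\times g$ yields
\[
s_{ij}(f(z),g(z))=f_i(z)g_j(z)-f_j(z)g_i(z),
\]
the $(i,j)$ Plücker minor of the matrix with rows $f$ and $g$. The crucial observation is that two points of $\mathbb{P}^k$ agree if and only if all their $2\times2$ minors vanish; applying this pointwise shows that $f\equiv g$ as maps into $\mathbb{P}^k$ holds precisely when $f_ig_j-f_jg_i\equiv0$ for every pair $i<j$. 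Since $f\not\equiv g$, there is therefore at least one pair $(i,j)$ for which $f_ig_j-f_jg_i\not\equiv0$. Taking $s=s_{ij}$ for this pair completes the argument: the first step gives $\Delta\subseteq\mathrm{Supp}(s)$, while $s(f(z),g(z))\not\equiv0$ shows $(f\times g)(\mathbb{C})\not\subseteq\mathrm{Supp}(s)$.

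The only delicate point, and the nearest thing to an obstacle, is the pointwise equivalence between $f\not\equiv g$ and the nonvanishing of some minor. This rests on the fact that a reduced representation never vanishes in all coordinates at a single point, so that the simultaneous vanishing of every $2\times2$ minor at $z$ forces the coordinate vectors $f(z)$ and $g(z)$ to be proportional, i.e.\ to represent the same point of $\mathbb{P}^k$. Once this is in hand, the remaining verifications are immediate, and no deeper input on line bundles or the geometry of $\mathbb{P}^k\times\mathbb{P}^k$ is needed.
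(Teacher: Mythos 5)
Your proof is correct and takes essentially the same route as the paper: the paper (citing \cite{DR10}, with the key construction visible in its proof of Proposition \ref{pro:1}) realizes the section $s$ as a linear combination $\sum_{0\leq m<l\leq k}a_{ml}(z_mw_l-z_lw_m)$ of exactly the antisymmetric minors $x_iy_j-x_jy_i$ that you identify, and uses $f\not\equiv g$ to ensure that such a section does not vanish identically along $f\times g$. Your write-up merely makes explicit the classification of bidegree-$(1,1)$ sections vanishing on the diagonal, which is implicit in the paper's choice of $P$.
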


We notice that the number of sharing hypersurfaces in Dulock-Ru's result is of order $k^{2k}$, which is much bigger than $3k+2$ or $2k+3$ in the hyperplane case. As an improvement of the truncated version of Ru's Second Main Theorem \cite{Ru09}, an expected smaller number of hypersurfaces can be found in Theorem \ref{thm:QandA} (below) of Quang and An \cite{QA17}. However, this number is still much bigger than $3k+2$ or $2k+3$.  Therefore, it would be interesting to try to get a uniqueness theorem for holomorphic curves sharing fewer hypersurfaces. 

Let $V$ be a complex projective subvariety of $\mathbb{P}^k$ of dimension $m(\leq k)$. Let $d$ be a positive integer. We denote by $I(V)$ the ideal of homogeneous polynomials in $\mathbb{C}[X_0,\dots, X_k]$ defining $V$,  and by $H_d$ the $\mathbb{C}$-vector space of all homogeneous polynomials in $\mathbb{C}[X_0,\dots, X_k]$ of degree $d$. Define $$I_d(V)=\frac{H_d}{I(V)\cap H_d}\ \  \mbox{and}\ \ H_V(d)=\mbox{dim}\  I_d(V).$$ Then $H_V(d)$ is the Hilbert function of $V$. Each element of $I_V(d)$ can be represented by $[Q]$ for some $Q\in H_d$. In the case where $V$ is a linear space of dimension $m$ and $d=1$, we have that $H_V(d)=m+1$. For the general case, it is easy to see that $H_V(d)\leq\dfrac{(k+d)!}{k!d!}\sim \dfrac{(k+d)^d}{d!}$ for a fixed $d$ and $k$ sufficiently large.

\begin{definition}
	Let $f:\mathbb{C}\rightarrow V$ be a holomorphic mapping of $\mathbb{C}$ into $V$. Then $f$ is said to be degenerated over $I_d(V)$ if there exists a non-zero $[Q]\in I_d(V)$ such that $Q(f)\equiv 0$. Otherwise, we say that $f$ is non-degenerated over $I_d(V)$. One can see that if $f$ is algebraically non-degenerated, then $f$ is non-degenerated over $I_d(V)$ for $d\geq 1$.
\end{definition} 

In 2017, Quang and An first established a truncated version of the Second Main Theorem involving $H_V(d)$ and as an application of this, they improved Dulock-Ru's result (Theorem \ref{thm:DMR}) and obtained the following uniqueness theorem for holomorphic curves sharing a possibly smaller number of hypersurfaces in $\mathbb{P}^k$:
\begin{theorem}[\cite{QA17}]\label{thm:QandA}
	Let $V$ be a complex projective subvariety of $\mathbb{P}^k$ of dimension $m (m\leq k)$. Let $\{Q_i\}_{i=1}^q$ be hypersurfaces in $\mathbb{P}^k$ in an $N$-subgeneral position with respect to $V$ and deg $Q_i=d_i$ $(1\leq i\leq q)$. Let $d=\mathrm{lcm}(d_1, \dots, d_q)$. Let $f$ and $g$ be holomorphic curves of $\mathbb{C}$ into $V$ which are non-degenerated over $I_d(V)$. Assume that 
	$f(z)=g(z)$ for any $z\in\displaystyle\bigcup_{i=1}^q \left\{f^{-1}(Q_j)\cup g^{-1}(Q_j)\right\}$.
	Then the following assertions hold:
	\begin{enumerate}[(a)]
		\item if $q> \dfrac{2(H_V(d)-1)}{d}+\dfrac{(2N-m+1)H_V(d)}{m+1},$ then $f\equiv g$;
		\item if $q> \dfrac{2(2N-m+1)H_V(d)}{m+1},$ then there exist $N+1$ hypersurfaces $(Q_{i_0}), \dots, (Q_{i_N}), 1\leq i_0 <\cdots < i_N\leq q$ such that $$\frac{Q_{i_0}(f)}{Q_{i_0}(g)} =\cdots = \frac{Q_{i_N}(f)}{Q_{i_N}(g)}.$$
	\end{enumerate}
	
\end{theorem}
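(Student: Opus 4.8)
The plan is to deduce both statements from the truncated Second Main Theorem of Quang and An via the classical ``Second Main Theorem $\Rightarrow$ uniqueness'' scheme, the novelty being that the nonlinearity of the $Q_i$ is absorbed into the Hilbert function $H_V(d)$. Throughout write $\theta:=\frac{(2N-m+1)H_V(d)}{m+1}$, let $M_0:=H_V(d)-1$ be the truncation level appearing in their Second Main Theorem, and set $S:=\bigcup_{i=1}^q\{f^{-1}(Q_i)\cup g^{-1}(Q_i)\}$. The starting point is to apply that Second Main Theorem to each of $f$ and $g$, giving
\begin{equation*}
\big\| \ (q-\theta)\,T_f(r)\le \sum_{i=1}^q \frac{1}{d_i}\,N^{[M_0]}_{Q_i(f)}(r)+o(T_f(r)),
\end{equation*}
and the analogous inequality for $g$. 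Everything then reduces to producing an \emph{upper} bound, in terms of $T_f+T_g$, for the counting sums on the right; this is exactly what the sharing hypothesis is for.

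The mechanism that turns sharing into such a bound is a family of auxiliary meromorphic functions. Put $h_i:=Q_i(f)/Q_i(g)$ and, for $i\ne j$, $D_{ij}:=Q_i(f)Q_j(g)-Q_j(f)Q_i(g)$, so that $D_{ij}\equiv 0$ precisely when $h_i\equiv h_j$. The hypothesis $f(z)=g(z)$ on $S$ has two consequences I would record first. The zero sets of $Q_i(f)$ and $Q_i(g)$ coincide, so $N^{[1]}_{Q_i(f)}=N^{[1]}_{Q_i(g)}$. More importantly, every $D_{ij}$ vanishes on all of $S$: at $z_0\in S$ one has $f(z_0)=g(z_0)=:w_0$, whence $D_{ij}(z_0)=Q_i(w_0)Q_j(w_0)-Q_j(w_0)Q_i(w_0)=0$. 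On the analytic side the First Main Theorem bounds these functions from above, $N_{D_{ij}}(r)\le T_{D_{ij}}(r)+O(1)\le (d_i+d_j)\big(T_f(r)+T_g(r)\big)+O(1)$.

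For part (b) I would argue by contradiction. If $f\equiv g$ the conclusion is trivial, so assume not, and suppose that no $N+1$ of the $h_i$ agree. Then the relation $i\sim j\Leftrightarrow h_i\equiv h_j$ partitions $\{1,\dots,q\}$ into classes each of size at most $N$; choosing indices from distinct classes yields many pairs with $D_{ij}\not\equiv 0$ whose zero divisors nonetheless contain $S$. Feeding the zero sets of these $D_{ij}$ into the bound $N_{D_{ij}}\le(d_i+d_j)(T_f+T_g)$, and using the $N$-subgeneral position to control how many of the $Q_i$ can vanish at a common point of $S$, one is led to a bound of the form $\theta\,(T_f+T_g)+o(T_f+T_g)$ for the summed right-hand side of the two Second Main Theorems. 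Comparing with the summed left-hand side $(q-\theta)(T_f+T_g)$ forces $q\le 2\theta$, contradicting $q>2\theta$ and producing the desired $N+1$ indices with $h_{i_0}=\cdots=h_{i_N}$.

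Part (a) requires a sharper comparison and is where the term $\frac{2(H_V(d)-1)}{d}=\frac{2M_0}{d}$ appears. Assuming $f\not\equiv g$, Lemma \ref{lem:DMR} supplies a section of $\pi_1^*H\otimes\pi_2^*H$ vanishing on the diagonal but not on $(f\times g)(\mathbb{C})$; concretely this is a nonzero minor $\Phi=f_pg_q-f_qg_p$ that again vanishes on all of $S$, with $N_\Phi(r)\le T_f(r)+T_g(r)+O(1)$. The point is that the truncation level $M_0$ lets one replace each full counting function by its $M_0$-truncated version at a cost of $\tfrac{2M_0}{d}(T_f+T_g)$, after which combining the vanishing of $\Phi$ on $S$ with the Second Main Theorems yields $q\le \theta+\frac{2M_0}{d}$; the hypothesis $q>\frac{2(H_V(d)-1)}{d}+\theta$ then rules out $f\not\equiv g$. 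I expect the main obstacle to lie precisely in this bookkeeping: passing from ``$D_{ij}$ (resp.\ $\Phi$) vanishes on $S$'' to a \emph{sharp} upper bound for $\sum_i d_i^{-1}N^{[M_0]}_{Q_i(f)}$ requires tracking the orders of vanishing of $Q_i(f)$ and $Q_i(g)$ at a shared point (which need not coincide), the $M_0$-truncation, and the overlap multiplicities forced by $N$-subgeneral position, so that all of these assemble into exactly the constants $\theta$ and $\frac{2M_0}{d}$ rather than the far larger bounds of the Dulock--Ru type in Theorem \ref{thm:DMR}.
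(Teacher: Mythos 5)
The first thing to note is that this paper contains no proof of Theorem \ref{thm:QandA} at all: it is quoted from \cite{QA17} purely as motivation, and the only uniqueness arguments actually carried out here are those for the paper's own Theorems \ref{thm:5curve} and \ref{thm:5} (totally geodesic hypersurfaces). Measured against that internal analogue, your architecture is exactly the paper's: a truncated Second Main Theorem applied to $f$ and $g$, the Dulock--Ru diagonal section (Proposition \ref{pro:1}, via Lemma \ref{lem:DMR}) for the analogue of part (a), and the Chen--Yan \cite{CY09} grouping with the auxiliary functions $D_{ij}=Q_i(f)Q_j(g)-Q_j(f)Q_i(g)$ for the analogue of part (b). So the skeleton is the right one, and part (b), while only sketched, defers the same bookkeeping the paper itself carries out explicitly in its proof of Theorem \ref{thm:5}(\ref{thm:5b}).

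The genuine gap is in your part (a), in the sentence claiming that the truncation level $M_0=H_V(d)-1$ lets one pass from full to truncated counting functions ``at a cost of $\frac{2M_0}{d}(T_f+T_g)$.'' The coordinate minor $\Phi=f_pg_q-f_qg_p$ vanishes only to order $\geq 1$ at each point of $S$, so what it controls is the unweighted point count $N(r,S)\leq T_f(r)+T_g(r)+O(1)$; it carries no multiplicity information. To dominate $\sum_i d_i^{-1}\bigl(N^{[M_0]}_{Q_i(f)}+N^{[M_0]}_{Q_i(g)}\bigr)$ by a multiple of $N(r,S)$ you must pay, at each shared point, up to $M_0/d_i$ for \emph{every} hypersurface through that point and for each of $f$ and $g$. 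Since $N$-subgeneral position allows up to $N$ hypersurfaces through a point of $V$, and since $1/d_i\geq 1/d$ (recall $d=\mathrm{lcm}(d_1,\dots,d_q)\geq d_i$, so this inequality points the wrong way for you), this route yields at best a cost of $\frac{2NM_0}{d_0}$ with $d_0=\min_i d_i$, i.e.\ a conclusion of the form $q\leq\theta+\frac{2NM_0}{d_0}$ --- strictly weaker than the stated threshold $q\leq\theta+\frac{2M_0}{d}$ (it is essentially the Dulock--Ru-type bound of Theorem \ref{thm:DMR} that Theorem \ref{thm:QandA} was designed to beat). Recovering the lcm $d$ and eliminating the factor $N$ forces one to build the auxiliary functions out of the hypersurfaces themselves with normalized degrees, e.g.\ $Q_i^{d/d_i}(f)Q_j^{d/d_j}(g)-Q_j^{d/d_j}(f)Q_i^{d/d_i}(g)$, whose vanishing order at a shared point is at least $\frac{d}{d_i}\min\{\mathrm{ord}\,Q_i(f),\mathrm{ord}\,Q_i(g)\}$, combined with the inequality $\min\{a,b\}\geq\min\{a,M_0\}+\min\{b,M_0\}-M_0$ --- precisely the Chen--Yan bookkeeping you invoke only for part (b) and explicitly postpone. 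That bookkeeping is not an administrative afterthought: it is where the constants $\frac{2(H_V(d)-1)}{d}$ and $\frac{2(2N-m+1)H_V(d)}{m+1}$ actually come from, and with the coordinate minor $\Phi$ in place of the $Q$-built functions it cannot be made to produce them.
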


\begin{remark} Part (b) of Theorem \ref{thm:QandA} implies Chen-Yan's result \cite{CY09} (see Corollary 1 in \cite{QA17}).
\end{remark}

Although the number of sharing hypersurfaces in Theorem \ref{thm:QandA} is much smaller than the one in Dulock-Ru's result (Theorem \ref{thm:DMR}), the number $H_V(d)$ is not easy to explicitly estimate and is bounded by $O(k^d)$ depending on the degree $d$ of the hypersurfaces. In this paper, we would like to give an explicit estimation (around $O(k^3)$ at independent over the degree $d$) of the number of shared special hypersurfaces. 

So far, the tools to solve the unicity problem of holomorphic curves have been various versions of the Second Main Theorem. In 2012, Tiba \cite{Tib12} made use of Demailly's \cite{Dem97} meromorphic partial projective connection (see Definition \ref{def:mppc}) to prove a Second Main Theorem for a holomorphic curve in $\mathbb{P}^k$ crossing totally geodesic hypersurfaces (see Definition \ref{def:tgh}). As a consequence, one can obtain a uniqueness theorem of holomorphic curves intersecting totally geodesic hypersurfaces; the required number of hypersurfaces is smaller than the one in Dulock-Ru's result (Theorem \ref{thm:DMR}), and more precise than the one in Quang-An's result (Theorem \ref{thm:QandA}).

To formulate our result, we have to introduce the definition of meromorphic partial projective connections first provided by Siu \cite{Siu87}, and that of totally geodesic hypersurfaces on a complex projective algebraic manifold $X$.  One can refer to Demaily \cite{Dem97}, Section 11 or Tiba \cite{Tib12}, Section 3 for the details.

Let $\{U_j\}_{1\leq j\leq N}$ be an affine open covering of $X$. 
\begin{definition}[Meromorphic partial projective connection]\label{def:mppc}
	A meromorphic partial projective connection $\Lambda$, relative to an affine open covering $\{U_j \}_{1\leq j \leq N}$ of $X$, is a collection of meromorphic connections $\Lambda_j$ on $U_j$ satisfying
	$$\Lambda_j -\Lambda_k =\alpha_{jk}\otimes Id_{TX} + Id_{TX} \otimes \beta_{jk}$$ on $U_j \cap U_k$
	for all $1 \leq j,k \leq N$, where $\alpha_{jk}$ and $\beta_{jk}$ are meromorphic one-forms on $U_j \cap U_k$. We write
	$\Lambda = \{(\Lambda_j,U_j)\}_{1\leq j\leq N}.$
\end{definition}
Let $S_j$ be the smallest subvariety of $X$ such that $\Lambda_j$ is a holomorphic connection on $U_j\setminus S_j \cap U_j$. We set supp$(\Lambda)_{\infty}:= \bigcup_{1\leq j\leq N} S_j$ and call it the polar locus of $\Lambda$.

Let $D$ be a reduced effective divisor of a $k$-dimensional complex projective algebraic manifold $X$ , and let $\Lambda$ be a meromorphic connection. Consider the holomorphic function $s$ on an open set $U \subset X$ such that $D|_U = (s)$, and fix a local coordinate system $(z_1,\dots,z_n)$ on $U$. In particular, if $X=\mathbb{P}^k$, one can always construct a meromorphic partial projective connection from some given homogenous polynomials (see Demailly \cite{Dem97} or Tiba \cite{Tib12}).

Let $S_0, \dots, S_k$ be homogenous polynomials of degree $d$ in $\C[X_0, \dots, X_k]$ such that   
\begin{equation*}
	\det\left(\frac{\partial S_{\mu}}{\partial X_i}\right)_{0\leq \mu, i\leq k}\not\equiv 0.
\end{equation*} 
Then one can construct the meromorphic connection $\widetilde{\Lambda}=d + \widetilde{\Gamma}$ on $\mathbb{C}^{k+1}$ defined by 
\begin{equation}\label{eqn:2}
	\sum_{0\leq\lambda\leq k}\frac{\partial S_{\mu}}{\partial X_{\lambda}}\widetilde{\Gamma}_{i,j}^{\lambda}=\frac{\partial^2S_{\mu}}{\partial X_i\partial X_j}.
\end{equation} 
This meromorphic connection induces the meromorphic partial projective connection $\Lambda$ on $\mathbb{P}^k$ by the following lemma from Demailly \cite{Dem97}:
\begin{lemma}[\cite{Dem97}]
	Let $\widetilde{\Lambda}=d + \widetilde{\Gamma}$ on $\mathbb{C}^{k+1}$. Let $\varsigma=\sum z_j\frac{\partial}{\partial z_j}$ be the Euler vector field on $\mathbb{C}^{k+1}$
	and let $\pi:\mathbb{C}^{k+1}\setminus\{0\}\rightarrow\mathbb{P}^k$ be the canonical projection. Then $\widetilde{\Lambda}$ induces a meromorphic partial projective connection $\Lambda$ on $\mathbb{P}^k$ provided that 
	\begin{enumerate}[(i)]
		\item the Christoffel symbols $\widetilde{\Gamma}_{j\mu}^{\lambda}$ are homogeneous rational functions of degree $-1$ (i.e. the total degree of its numerator minus that of its denominator is equal to $-1$);
		\item  there are meromorphic functions $\alpha, \beta$ and meromorphic 1-forms $\gamma, \eta$ on $\mathbb{C}^{k+1}\setminus\{0\}$ such that $\widetilde{\Gamma}\cdot<\varsigma, v>=\alpha v+\gamma(v)\varsigma$ and $\widetilde{\Gamma}\cdot<w, \varsigma>=\beta w+\eta(w)\varsigma$ for all vector fields $v, w$.
	\end{enumerate}
\end{lemma}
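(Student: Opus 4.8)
The plan is to realise $\Lambda$ as the descent of $\widetilde{\Lambda}$ along the fibration $\pi$, whose fibres are the $\mathbb{C}^*$-orbits generated by the Euler field $\varsigma$. The starting point is the exact sequence of vector bundles on $\mathbb{C}^{k+1}\setminus\{0\}$,
$$0 \longrightarrow \langle\varsigma\rangle \longrightarrow T(\mathbb{C}^{k+1}\setminus\{0\}) \xrightarrow{\ d\pi\ } \pi^* T\mathbb{P}^k \longrightarrow 0,$$
which identifies $\pi^* T\mathbb{P}^k$ with the quotient $T(\mathbb{C}^{k+1}\setminus\{0\})/\langle\varsigma\rangle$. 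The idea is to push the covariant derivative $\widetilde{\nabla}=d+\widetilde{\Gamma}$ through $d\pi$: for local vector fields $\bar u,\bar v$ on an affine chart $U_j\subset\mathbb{P}^k$, lift them to homogeneous vector fields $u,v$ on $\pi^{-1}(U_j)$ and set $\Lambda_j(\bar u,\bar v):=d\pi(\widetilde{\nabla}_u v)$, i.e. the class of $\widetilde{\nabla}_u v$ modulo $\langle\varsigma\rangle$. The entire content is to show that this is independent of the chosen lifts up to the affine ambiguity $\alpha\otimes Id + Id\otimes\beta$ tolerated by Definition \ref{def:mppc}, and that it descends from $\pi^{-1}(U_j)$ to $U_j$.

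First I would dispose of the lift ambiguity, which is exactly what hypothesis (ii) is designed to control. Changing the section lift $v\mapsto v+\phi\varsigma$ produces
$$\widetilde{\nabla}_u(v+\phi\varsigma)=\widetilde{\nabla}_u v+(u\phi)\varsigma+\phi\,\widetilde{\nabla}_u\varsigma,$$
where the middle term dies in the quotient by $\langle\varsigma\rangle$, and since $\varsigma$ has linear components one computes $\widetilde{\nabla}_u\varsigma = u + \widetilde{\Gamma}\cdot\langle u,\varsigma\rangle$, so the second identity of (ii) forces $\widetilde{\nabla}_u\varsigma\equiv(1+\beta)\,u\pmod{\langle\varsigma\rangle}$, a multiple of the direction $u$, i.e. an $Id\otimes\beta$-type term. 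Symmetrically, changing the direction lift $u\mapsto u+\psi\varsigma$ adds $\psi\,\widetilde{\nabla}_\varsigma v$, and for a homogeneous lift $\varsigma(v)$ is itself a scalar multiple of $v$, so the first identity of (ii) gives $\widetilde{\nabla}_\varsigma v\equiv(\mathrm{scalar})\,v\pmod{\langle\varsigma\rangle}$, a multiple of the section $v$, i.e. an $\alpha\otimes Id$-type term. Thus both changes move $\Lambda_j$ only within the prescribed projective ambiguity, and the data $\alpha,\beta,\gamma,\eta$ are precisely what packages this ambiguity; note already here that the reduction of $\varsigma(v)$ to a multiple of $v$ is where the homogeneity of (i) is implicitly used.

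Second I would invoke hypothesis (i) to secure the descent to the base and to compute the transition between charts. Working with the tautological sections $s_j\colon U_j\to\mathbb{C}^{k+1}$ (say $X_j\equiv 1$), the degree $-1$ homogeneity of the $\widetilde{\Gamma}^{\lambda}_{ij}$ is exactly the scaling weight that makes $\widetilde{\nabla}$ compatible with the $\mathbb{C}^*$-action, so that $\Lambda_j:=s_j^{*}\widetilde{\nabla}$ (read modulo $\varsigma$) is independent of the point chosen in the fibre and defines a meromorphic connection on $U_j$ away from the polar locus $\mathrm{supp}(\Lambda)_\infty$. Over an overlap $U_j\cap U_k$ the two sections differ by the scaling $\lambda_{jk}=X_j/X_k$; feeding this scaling through $d+\widetilde{\Gamma}$, the homogeneity together with the Leibniz term $d\log\lambda_{jk}$ and the two identities of (ii) yields
$$\Lambda_j-\Lambda_k=\alpha_{jk}\otimes Id_{TX}+Id_{TX}\otimes\beta_{jk},$$
with $\alpha_{jk},\beta_{jk}$ meromorphic one-forms assembled from $d\log\lambda_{jk}$, $\gamma$ and $\eta$. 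This is exactly the cocycle relation of Definition \ref{def:mppc}, so $\Lambda=\{(\Lambda_j,U_j)\}$ is a meromorphic partial projective connection.

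I expect the genuine obstacle to be this last bookkeeping step: verifying that the combined effect of the scaling transition and the Euler-field identities collapses \emph{exactly} into the two rank-one tensors $\alpha_{jk}\otimes Id$ and $Id\otimes\beta_{jk}$, with no residual genuine (non-ambiguity) tensorial part, and that the resulting forms are meromorphic with poles confined to $\mathrm{supp}(\Lambda)_\infty$. In other words, (i) and (ii) must be shown to be not merely sufficient for descent in each slot separately, but jointly to force the difference into the precise subspace of deformations a projective (as opposed to affine) connection tolerates; establishing this requires a careful coordinate computation of how $d\pi$ intertwines $\widetilde{\nabla}$ with the scaling, which is where the homogeneity condition and the two Euler-field conditions are consumed simultaneously.
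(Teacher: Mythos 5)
First, a point about the comparison itself: the paper contains \emph{no proof} of this lemma --- it is quoted from Demailly \cite{Dem97} (and the construction is recalled in Section 3 of Tiba \cite{Tib12}) --- so your attempt can only be measured against Demailly's argument. Your plan does reconstruct that argument's structure: identify $\pi^*T\mathbb{P}^k$ with $T(\mathbb{C}^{k+1}\setminus\{0\})/\langle\varsigma\rangle$, define each $\Lambda_j$ by pushing $\widetilde{\nabla}=d+\widetilde{\Gamma}$ through the quotient using homogeneous lifts, use hypothesis (ii) to confine the lift ambiguity, and use hypothesis (i) for $\mathbb{C}^*$-equivariance and the inter-chart cocycle. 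The two computations you actually carry out are correct: $\widetilde{\nabla}_u\varsigma=u+\widetilde{\Gamma}\cdot\langle u,\varsigma\rangle=(1+\beta)u+\eta(u)\varsigma$, and, for a lift $v$ with degree-one homogeneous components, $\widetilde{\nabla}_\varsigma v=(1+\alpha)v+\gamma(v)\varsigma\equiv(1+\alpha)v\pmod{\langle\varsigma\rangle}$, and these land precisely in the two rank-one directions the definition of a partial projective connection tolerates.

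Two things, however, keep this from being a complete proof. (a) A misattribution: the reduction $\varsigma(v)\propto v$ is Euler's formula applied to the homogeneous lift $v$ --- a normalization you chose --- and has nothing to do with hypothesis (i); condition (i) is consumed only through the scaling law $\widetilde{\Gamma}(tz)=t^{-1}\widetilde{\Gamma}(z)$, which is what makes $\Lambda_j$ independent of the point chosen in the fibre and makes the eventual transition forms homogeneous of degree $0$, hence genuinely defined on $U_j\cap U_k\subset\mathbb{P}^k$. (b) More substantively, the step you yourself flag as ``the genuine obstacle'' --- that $\Lambda_j-\Lambda_k$ collapses exactly into $\alpha_{jk}\otimes Id_{TX}+Id_{TX}\otimes\beta_{jk}$ --- is the entire content of the lemma and is deferred rather than done. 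Your ambiguity terms carry scalar coefficients such as $\phi(1+\beta)$, and one must verify that, for the specific $\phi$ produced by the transition between tautological sections, these coefficients are \emph{linear} in the relevant vector argument, i.e.\ contractions with meromorphic one-forms. This does work out --- differentiating $s_k=\lambda_{jk}s_j$ gives $ds_k(\bar u)=\lambda_{jk}\,ds_j(\bar u)+\big(d\log\lambda_{jk}(\bar u)\big)\varsigma$, so the coefficient is the one-form $d\log\lambda_{jk}$ corrected by contractions with $\gamma$ and $\eta$ --- but without this computation the difference is not visibly of the required shape. In short: right route, correct partial computations, decisive verification missing.
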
 

\begin{definition}[Totally geodesic hypersurface]\label{def:tgh}
	A hypersurface $D$ is said to be totally geodesic with respect to $\Lambda$ on $U$ if there exist meromorphic one-forms $$a=\sum_{j=1}^k a_jdz_j,\ b =\sum_{j=1}^kb_jdz_j$$ and a meromorphic two-form
	$c =\sum_{1\leq j,\mu\leq k} c_{j\mu}dz_j \otimes dz_{\mu}$ such that no polar locus of $a_j,b_j$, or $c_{j\mu} (1 \leq j,\mu \leq k)$ contains supp $D|_U$ , and
	$$\Lambda^*(ds)=d^2s-ds\circ \Gamma =a\otimes ds+ds\otimes b+sc$$
	in $U$, where $\Lambda^*$ is the induced connection on the cotangent bundle $TX^*$ of $X$.
\end{definition}

Now, we are ready to present our uniqueness theorem for holomorphic curves intersecting totally geodesic hypersurfaces.
\begin{theorem}\label{thm:5curve} 
	Let $S_0, S_1, \dots, S_k$ be homogeneous polynomials of degree $d$ in $\mathbb{C}[X_0, \dots, X_k]$ satisfying \begin{equation}\label{eqn:det}
		\det\left(\frac{\partial S_{\mu}}{\partial X_i}\right)_{0\leq \mu, i\leq k}\not\equiv 0.
	\end{equation}
	Assume that $\sigma_j$, $1\leq j\leq q$ are elements of the linear system $Y_{\alpha}=\{\alpha_0S_0+\alpha_1S_1+\dots+\alpha_kS_k=0\}$ such that the hypersurfaces $Y_j=(\sigma_j), 1\leq j\leq q$ are smooth and in a general position. Let $f$ and $g$ be two holomorphic curves from $\C$ into $\mathbb{P}^k$ such that  their images are neither contained in the support of an element of the linear system $Y_{\alpha}$ nor contained in the polar locus of the meromorphic partial projective connection $\Lambda$ induced by (\ref{eqn:2}).
	Suppose that $f(z)=g(z)$ for all $z\in S$, where $$S:=\bigcup_{j=1}^q\left\{f^{-1}(\sigma_j)\cup g^{-1}(\sigma_j)\right\}.$$ Then the following assertions hold:
	\begin{enumerate}[(i)]
		\item\label{thm:5a} if $q> \dfrac{3k+1}{d}+\dfrac{1}{2d}(k-1)k(k+1)(d-1),$ then $f\equiv g$;\\
		\item\label{thm:5b} if $q> 2\left(\dfrac{k+1}{d}+\dfrac{1}{2d}(k-1)k(k+1)(d-1)\right)$, then there exist $k+1$ hypersurfaces $Y_{i_0}, \dots, Y_{i_k}$, $1\leq i_0 <\cdots < i_k\leq q$ such that $$\dfrac{\sigma_{i_0}(f)}{\sigma_{i_0}(g)} =\cdots = \dfrac{\sigma_{i_k}(f)}{\sigma_{i_k}(g)}.$$
	\end{enumerate}
\end{theorem}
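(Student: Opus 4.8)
The numerology of the two thresholds shows that Theorem~\ref{thm:5curve} is the totally-geodesic analogue of Quang--An's Theorem~\ref{thm:QandA}, with the Hilbert number $H_V(d)$ replaced by $k+1$: the linear system $Y_\alpha$ has projective dimension $k$, so its ``effective'' dimension is $k+1$, and the quantity $2(H_V(d)-1)/d$ becomes $2k/d$. The plan is therefore to run the Quang--An/Chen--Yan uniqueness machine, but with their Second Main Theorem replaced by the Second Main Theorem for totally geodesic hypersurfaces established earlier in this paper (built from the connection $\Lambda$ of \eqref{eqn:2}, whose existence is guaranteed by \eqref{eqn:det}). I will use the latter in the form
\begin{equation*}
	\big(qd-(k+1)-\tfrac12(k-1)k(k+1)(d-1)\big)\,T_f(r)\ \le\ \sum_{j=1}^q N^{[1]}_{\sigma_j(f)}(r)+o(T_f(r)),
\end{equation*}
valid for all $r$ outside a set of finite Lebesgue measure, and the same inequality for $g$; here $N^{[1]}$ is the counting function truncated to level $1$, the term $k+1$ is the Cartan-type defect of the $(k+1)$-dimensional system, and $\tfrac12(k-1)k(k+1)(d-1)$ is the contribution of the polar locus of $\Lambda$. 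Writing $\mathcal B:=\frac{k+1}{d}+\frac{(k-1)k(k+1)(d-1)}{2d}$, this reads $(q-\mathcal B)T_f(r)\le \frac1d\sum_j N^{[1]}_{\sigma_j(f)}(r)+o(T_f(r))$, the hypotheses on $f,g$ (images avoiding the polar locus of $\Lambda$ and every element of $Y_\alpha$) being exactly what is needed to apply it.

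For part (i) I will argue by contradiction, assuming $f\not\equiv g$. Fix reduced representations $\tilde f,\tilde g\colon\C\to\C^{k+1}$. The crucial use of the sharing hypothesis is geometric: since $f(z)=g(z)$ for $z\in S$, the image of $S$ under $f\times g\colon\C\to\mathbb{P}^k\times\mathbb{P}^k$ lies on the diagonal $\Delta$. Lemma~\ref{lem:DMR} then furnishes a single section $s$ of the bidegree-$(1,1)$ bundle $H'=\pi_1^*H\otimes\pi_2^*H$ with $\Delta\subset\mathrm{Supp}(s)$ but $(f\times g)(\C)\not\subset\mathrm{Supp}(s)$ (this is where $f\not\equiv g$ is used). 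Hence $\Theta:=s\circ(f\times g)$ is entire, $\Theta\not\equiv0$, and vanishes on all of $S$, so since $s$ has bidegree $(1,1)$ the First Main Theorem gives the master estimate
\begin{equation*}
	N^{[1]}_S(r)\ \le\ N_\Theta(r,0)\ \le\ T_f(r)+T_g(r)+O(1).
\end{equation*}
On the other hand the general position of $Y_1,\dots,Y_q$ means at most $k$ of them pass through any point, so (using $S=\bigcup_j f^{-1}(Y_j)=\bigcup_j g^{-1}(Y_j)$ from the sharing) one has $\sum_{j}N^{[1]}_{\sigma_j(f)}(r)\le k\,N^{[1]}_S(r)$, and likewise for $g$. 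Substituting into the Second Main Theorem gives $(q-\mathcal B)T_f(r)\le \frac{k}{d}\big(T_f(r)+T_g(r)\big)+o(T_f(r))$ and the symmetric inequality for $g$; adding the two and dividing by $T_f(r)+T_g(r)\to\infty$ yields $q-\mathcal B\le \frac{2k}{d}$, i.e. $q\le \frac{3k+1}{d}+\frac{(k-1)k(k+1)(d-1)}{2d}$, contradicting the hypothesis. Hence $f\equiv g$.

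For part (ii) I will instead exploit the functions $h_j:=\sigma_j(\tilde f)/\sigma_j(\tilde g)$ (nonzero meromorphic, by the non-containment hypotheses) and the equivalence $i\sim j\iff h_i\equiv h_j$. If some class of $\sim$ has at least $k+1$ members we are done, so suppose every class has at most $k$ members. The sharing enters algebraically here: at $z_0\in S$ one has $\tilde f(z_0)=\lambda\,\tilde g(z_0)$ for some $\lambda\in\C^*$, whence $\sigma_i(\tilde f)(z_0)=\lambda^{d}\sigma_i(\tilde g)(z_0)$ \emph{simultaneously} for all $i$, so every cross-class combination $\Phi_{ij}:=\sigma_i(\tilde f)\sigma_j(\tilde g)-\sigma_j(\tilde f)\sigma_i(\tilde g)$ (which is $\not\equiv0$ precisely when $i\not\sim j$) vanishes on $S$. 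Running the combinatorial counting of Chen--Yan and Quang--An over the $\sim$-classes, each of size $\le k$, bounds $\sum_j N^{[1]}_{\sigma_j(f)}(r)$ and its analogue for $g$ by a suitable multiple of $T_f(r)+T_g(r)$; inserting these into the Second Main Theorem for $f$ and for $g$ produces $q\le 2\mathcal B$, contradicting the hypothesis. Hence a class of size $\ge k+1$ exists, giving indices $i_0<\dots<i_k$ with $h_{i_0}\equiv\dots\equiv h_{i_k}$, which is the assertion.

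Two steps carry the real weight. First, everything is calibrated to the fact that the Second Main Theorem of this paper truncates at level $1$ (rather than at level $k$, as Cartan's theorem does); it is this truncation, together with the ``at most $k$ hypersurfaces through a point'' bound, that produces the sharp coefficient $\frac{2k}{d}$ in part (i) with no spurious factor of $k$, and confirming that our totally-geodesic Second Main Theorem indeed delivers level-$1$ truncation with the stated polar contribution $\frac{1}{2d}(k-1)k(k+1)(d-1)$ is the essential structural input drawn from the earlier part of the paper. Second, and this is the step I expect to be the main obstacle in part (ii), the class-counting must be organized so as to reach exactly the constant $2\mathcal B$: one has to track, at each point of $S$, how the contact multiplicities of $f$ and of $g$ with the hypersurfaces through that point distribute among the $\le k$ ratio-classes, and show that the vanishing of the cross-class $\Phi_{ij}$ on $S$ forces the required cancellation. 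Part (i), by contrast, is comparatively clean once Lemma~\ref{lem:DMR} supplies the single bidegree-$(1,1)$ section bounding $N^{[1]}_S$.
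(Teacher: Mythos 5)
Your overall architecture does match the paper's: the paper deduces Theorem~\ref{thm:5curve} from its disk version (Theorem~\ref{thm:5} with $c_f=c_g=0$), proving part (i) by contradiction from the Second Main Theorem (Theorem~\ref{thm:2}) combined with Proposition~\ref{pro:1} --- which is precisely your use of Lemma~\ref{lem:DMR} and the First Main Theorem to get $N(r,S)\leq T_f(r)+T_g(r)+O(1)$ --- and part (ii) by the Chen--Yan grouping argument. However, there is a genuine gap at exactly the point you single out as the ``essential structural input''. You invoke the paper's Second Main Theorem with counting functions truncated at level $1$, and you assert that this level-$1$ truncation is what avoids ``a spurious factor of $k$''. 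But Theorem~\ref{thm:2} (and Theorem~\ref{thm:main} behind it) truncates at level $k$: its right-hand side is $\sum_j N_k(r,f^*(\sigma_j))$, not $\sum_j N_1(r,f^*(\sigma_j))$. A level-$1$ truncated SMT in this setting is a strictly stronger statement that is proved nowhere in the paper, so your master inequality is unavailable. The paper's actual proof keeps $N_k$ and disposes of the factor $k$ inside the counting step coming from the sharing hypothesis, namely
$$\sum_{i}\bigl(n_k(r,f^*(\sigma_i))+n_k(r,g^*(\sigma_i))\bigr)\leq k\sum_i\bigl(n_1(r,f^*(\sigma_i))+n_1(r,g^*(\sigma_i))\bigr)\leq 2k\, n(r,S),$$
followed by Proposition~\ref{pro:1}; the coefficient $2k/d$, hence the ``$3k+1$'' in the threshold, comes from this chain and not from any level-$1$ SMT. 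To repair your part (i) you must either prove the level-$1$ SMT you assumed (a substantial new result) or redo the counting with $N_k$ as the paper does: note that naively combining the level-$k$ SMT with your bound $\sum_j N_1(r,f^*(\sigma_j))\leq k\,N(r,S)$ gives only $q\leq \mathcal{B}+2k^2/d$, which misses the stated threshold, so the passage from $N_k$ to the count of $S$ is genuinely delicate and cannot be waved through.

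Your part (ii) has a second, independent gap: it is a pointer rather than a proof. You correctly identify the ratio classes, the fact that failure of the assertion forces every class to have at most $k$ members, and the vanishing on $S$ of the cross-class functions $\Phi_{ij}$ (the paper's $P_i$). But the entire quantitative content of the paper's argument --- the explicit cyclic pairing $p(i)=i+k$ (reduced mod $q$), which together with the bound $k$ on class sizes guarantees $P_i\not\equiv 0$ for \emph{every} $i$; the decomposition $S_1=S\setminus\{\sigma_i(f)\sigma_{p(i)}(f)=0\}$ with its two-case analysis; the inequality $\min\{a,b\}\geq\min\{a,k\}+\min\{b,k\}-k$ converting the zero-count of $P_i$ into $N_k$- and $N_1$-terms; and the upper bound $N_{P_i}(r,0)\leq d\bigl(T_f(r)+T_g(r)\bigr)+O(1)$ obtained by viewing $P_i/(\sigma_i(f)\sigma_i(g))$ as a difference of two meromorphic functions --- is exactly what you defer to ``the combinatorial counting of Chen--Yan and Quang--An'', and you yourself flag it as the main obstacle. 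That counting \emph{is} the proof of (ii) in the paper, and it too is run with the level-$k$ truncation of Theorem~\ref{thm:2}, so as written this part is incomplete and is additionally contaminated by the same truncation-level misstatement as part (i).
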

\begin{remark}
	The number of hypersurfaces required in Theorem \ref{thm:5curve} becomes much smaller than for the previous results  (cf. \cite{QA17,DR08}).\end{remark}
In the case $d=1$, from Theorem \ref{thm:5curve}(\ref{thm:5b}), we can also recover the uniqueness theorem for linearly non-degenerated holomorphic curves sharing $2k+3$ hyperplanes in $\mathbb{P}^k$ in a general position; this was first obtained by Chen and Yan \cite{CY09}.

Indeed, when $d=1$, let $S_i=X_i$, the meromorphic partial projective connection in $\mathbb{P}^k$ is the flat connection, and it is easy to see that any hyperplane $H$ is an element of the linear system $H_{\alpha}=\{\alpha_0S_0+ \cdots+ \alpha_kS_k=0\}$, and always smooth. We choose $q\geq 2k+3$ hyperplanes $H_i, 1\leq i\leq q$ to be  in a general position. From Theorem \ref{thm:5curve}(\ref{thm:5b}), there exist $k+1$ hyperplanes $H_{i_0}, \dots, H_{i_k}$ such that $$m=\dfrac{H_{i_0}(f)}{H_{i_0}(g)} =\cdots = \dfrac{H_{i_k}(f)}{H_{i_k}(g)}.$$ As $H_1, \dots, H_q$ are in a general position, for any $k+1$ hyperplanes $\{H_{i_0}, \dots, H_{i_k}\}$, the determinant of $(H_{i_0}, \dots, H_{i_k})$ is nonzero, hence  $f_i=mg_i$, and therefore $f\equiv g$.

Recently, Ru and Sibony \cite{RS19} defined a growth index of a holomorphic map $f$ from a disc $\mathbb{D}_R$ centred at zero with radius $R$ to a complex manifold and generalized the classical value distribution theory for holomorphic curves on the whole complex plane. 
\begin{definition}
	Let $M$ be a complex manifold with a positive (1, 1) form $\omega$ of finite volume. Let $0<R\leq \infty$ and let $f:\D_R\rightarrow M$ be a holomorphic map. The growth index of $f$ with respect to $\omega$ is defined as 
	$$c_{f, \omega}:=\inf\left\{c>0\big|\int_0^R\exp(cT_{f, \omega}(r))dr=\infty\right\},$$ where $$T_{f, \omega}(r):=\int_0^r\frac{dt}{t}\int_{|z|<t}f^*\omega, \quad 0<r<R$$ is the characteristic function of $f$ with respect to $\omega$. If $M=\mathbb{P}^k$ and $\omega$ is the Fubini-Study form, then we simply denote $c_{f, \omega}$ by $c_f$.  
\end{definition}
Notice that if $R=\infty$, then $c_{f, \omega}=0$. Therefore, we can also extend Theorem \ref{thm:5curve} to the hyperbolic case.
\begin{theorem}\label{thm:5} 
	Let $S_0, S_1, \dots, S_k$ be homogeneous polynomials of degree $d$ in $\mathbb{C}[X_0, \dots, X_k]$ satisfying \begin{equation}\label{eqn:det}
		\det\left(\frac{\partial S_{\mu}}{\partial X_i}\right)_{0\leq \mu, i\leq k}\not\equiv 0.
	\end{equation}
	Assume that $\sigma_j$, $1\leq j\leq q$ are elements of the linear system $Y_{\alpha}=\{\alpha_0S_0+\alpha_1S_1+\dots+\alpha_kS_k=0\}$ such that the hypersurfaces $Y_j=(\sigma_j), 1\leq j\leq q$ are smooth and in a general position. Let $f$ and $g$ be two holomorphic maps from $\D_R$ into $\mathbb{P}^k$ with $c_f<\infty$ and $c_g<\infty$ such that  their images are neither contained in the support of an element of the linear system $Y_{\alpha}$ nor contained in the polar locus of $\Lambda$.
	Suppose that $f(z)=g(z)$ for all $z\in S$, where $$S:=\bigcup_{j=1}^q\left\{f^{-1}(\sigma_j)\cup g^{-1}(\sigma_j)\right\}.$$ Then the following assertions hold:
	\begin{enumerate}[(i)]
		\item\label{thm:5a} if $q> \dfrac{3k+1}{d}+\dfrac{1}{2d}(k-1)k(k+1)(d-1)+\dfrac{k^2(k+1)^2}{2}\max\{c_f,c_g\},$ then $f\equiv g$;\\
		\item\label{thm:5b} if $q> 2\left(\dfrac{k+1}{d}+\dfrac{1}{2d}(k-1)k(k+1)(d-1)\right)+\dfrac{k^2(k+1)^2}{d}\max\{c_f,c_g\}$, then there exist $k+1$ hypersurfaces $Y_{i_0}, \dots, Y_{i_k}$, $1\leq i_0 <\cdots < i_k\leq q$ such that $$\dfrac{\sigma_{i_0}(f)}{\sigma_{i_0}(g)} =\cdots = \dfrac{\sigma_{i_k}(f)}{\sigma_{i_k}(g)}.$$
	\end{enumerate}
\end{theorem}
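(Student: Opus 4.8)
The plan is to obtain Theorem~\ref{thm:5} as the finite-growth-index counterpart of the entire-plane Theorem~\ref{thm:5curve}, by replacing the classical Second Main Theorem with its disk analogue for totally geodesic hypersurfaces (the main Second Main Theorem of this paper) and then running the same uniqueness machinery, taking care that the Ru--Sibony growth-index term is retained throughout rather than discarded as an error. The organising reduction is that each $\sigma_j=\sum_{i=0}^k\alpha_{j,i}S_i$ lies in the span of $S_0,\dots,S_k$, so passing to the auxiliary curves $\tilde f:=(S_0(f):\cdots:S_k(f))$ and $\tilde g:=(S_0(g):\cdots:S_k(g))$ turns $Y_j=(\sigma_j)$ into the hyperplane $\tilde H_j$ with coefficients $(\alpha_{j,0},\dots,\alpha_{j,k})$. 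The Jacobian hypothesis $\det\bigl(\partial S_\mu/\partial X_i\bigr)_{0\le\mu,i\le k}\not\equiv0$ makes $S=(S_0,\dots,S_k)$ a finite map, so $T_{\tilde f}(r)=d\,T_f(r)+O(1)$ and, from the definition of the growth index, $c_{\tilde f}=c_f/d$; the non-containment hypotheses make $\tilde f,\tilde g$ linearly non-degenerate, and general position of the $Y_j$ passes to general position of the $\tilde H_j$. The totally geodesic condition (Definition~\ref{def:tgh}) is precisely what bounds the Wronskian term of $\tilde f$, yielding the correction $\tfrac{1}{2d}(k-1)k(k+1)(d-1)$, while the growth index enters only through the logarithmic-derivative estimate and contributes a genuine multiple of $T_f(r)$.

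Concretely, I would first record the disk Second Main Theorem in the normalised form
\[
\bigl(q-(k+1)\bigr)\,d\,T_f(r)\ \le\ \sum_{j=1}^{q}N^{[k]}_{Y_j}(f,r)+\tfrac{1}{2}(k-1)k(k+1)(d-1)\,T_f(r)+\tfrac{k(k+1)}{2}\,c_f\,T_f(r)+o\bigl(T_f(r)\bigr),
\]
valid for $r$ outside a set of finite measure, together with the identical estimate for $g$; here the relation $c_{\tilde f}\,T_{\tilde f}(r)=c_f\,T_f(r)+O(1)$ is what converts the growth-index term of the composed curve into a clean $c_f\,T_f(r)$ contribution. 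Next I would introduce the meromorphic functions $\phi_j:=\sigma_j(f)/\sigma_j(g)=\tilde H_j(\tilde f)/\tilde H_j(\tilde g)$ and exploit the sharing hypothesis: at every point of $f^{-1}(\sigma_j)\cup g^{-1}(\sigma_j)$ one has $\tilde f=\tilde g$, so the bilinear auxiliary functions $P_{ij}:=\tilde H_i(\tilde f)\tilde H_j(\tilde g)-\tilde H_j(\tilde f)\tilde H_i(\tilde g)$ vanish along all shared zeros of $Y_i$ and $Y_j$ whenever $\phi_i\not\equiv\phi_j$. Since $T_{P_{ij}}(r)\le d\bigl(T_f(r)+T_g(r)\bigr)+O(1)$, this produces the comparison between the summed truncated shared counting functions and $d(T_f+T_g)$ that drives the numerics.

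For part~(i) I would argue by contradiction: if $f\not\equiv g$, then $\phi_i\not\equiv\phi_j$ for some pair; adding the two Second Main Theorems, inserting the shared-counting bound, and using that general position keeps the auxiliary functions $P_{ij}$ nontrivial (the same general-position/determinant input that, once all $k+1$ ratios agree, forces $f\equiv g$ as in the $d=1$ discussion above) yields, after division by $T_f(r)$ and $T_g(r)$ and the limit $r\to R$, a numerical inequality contradicting the stated lower bound on $q$. For part~(ii) I would run the pigeonhole variant: group $\{1,\dots,q\}$ by the relation $\phi_i\equiv\phi_j$; were every class of size at most $k$, the same inequalities would bound $q$ below the part~(ii) threshold, so some class must contain $k+1$ indices $i_0<\dots<i_k$ with $\sigma_{i_0}(f)/\sigma_{i_0}(g)=\dots=\sigma_{i_k}(f)/\sigma_{i_k}(g)$. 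Setting $R=\infty$ gives $c_f=c_g=0$ and recovers Theorem~\ref{thm:5curve}, the natural consistency check.

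The step I expect to be the real obstacle is not geometric but the quantitative bookkeeping of the growth-index term. In the entire-plane case its analogue is absorbed into $o(T_f(r))$ and vanishes after dividing by $T_f(r)$, whereas on $\D_R$ it is a definite fraction of $T_f(r)$ that must be carried all the way through, and whose coefficient is amplified from the single-map size $\tfrac{k(k+1)}{2}$ to the stated $\tfrac{k^2(k+1)^2}{2}$ in part~(i) and $\tfrac{k^2(k+1)^2}{d}$ in part~(ii). Pinning down this amplification --- tracking the extra factor of order $k(k+1)$ coming from the general-position/determinant step, keeping the degree $d$ in the correct places through the rescaling $T_{\tilde f}=d\,T_f$ and $c_{\tilde f}=c_f/d$, and replacing $c_f$ and $c_g$ simultaneously by $\max\{c_f,c_g\}$ at the moment the two Second Main Theorems are combined --- is where essentially all the care is needed.
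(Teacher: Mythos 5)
Your proposal diverges from the paper in two places, and one of them is a genuine gap that sinks part (i). Your contradiction step for part (i) asserts that $f\not\equiv g$ forces $\phi_i\not\equiv\phi_j$ for some pair, justified by the determinant/general-position argument "as in the $d=1$ discussion." For $d\geq 2$ this inference is invalid: if all the ratios agree, general position of the $Y_j$ only yields $S_\mu(f)=m\,S_\mu(g)$ for every $\mu$, i.e.\ $S\circ f\equiv S\circ g$ for the morphism $S=(S_0:\cdots:S_k)$ (whose base locus is indeed empty, as it lies in $\bigcap_j Y_j=\emptyset$), and $S$ is generically $d^k$-to-one, so this does not give $f\equiv g$. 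For instance with $k=1$, $S_0=X_0^2$, $S_1=X_1^2$, the maps $f=[f_0:f_1]$ and $g=[f_0:-f_1]$ satisfy $\sigma_j(f)\equiv\sigma_j(g)$ for every element of the linear system, yet $f\not\equiv g$. In the "single group" case all your pair functions $P_{ij}$ vanish identically and your machinery produces nothing --- and this is precisely the case that separates part (i), with its smaller threshold, from part (ii). The paper's proof handles it with Proposition \ref{pro:1}, built on Dulock--Ru's Lemma \ref{lem:DMR}: whenever $f\not\equiv g$ there is a bilinear section $P=\sum a_{ml}(z_mw_l-z_lw_m)$ vanishing on the diagonal of $\mathbb{P}^k\times\mathbb{P}^k$ with $P(f,g)\not\equiv 0$, whence $N(r,S)\leq T_f(r)+T_g(r)+O(1)$ independently of any behaviour of the ratios. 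Your proposal never invokes this diagonal-section estimate, and without it part (i) cannot be obtained.

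The second divergence is your "normalised" Second Main Theorem, which is a hybrid that matches neither the paper's Theorem \ref{thm:2} nor a clean Cartan-type statement for the composed curve $\tilde{f}$. The paper applies Theorem \ref{thm:2} directly to $f$ and the $\sigma_j$ (no composed curve at all); its deduction term is $(k+1)T_f(r)+\tfrac{1}{2}(k-1)k(k+1)(d-1)T_f(r)$, \emph{not} $(k+1)\,d\,T_f(r)$ --- avoiding the factor $d$ on $k+1$ is exactly what the meromorphic-connection method buys. Your inequality carries the deduction $(k+1)dT_f(r)$, keeps the Wronskian correction, and uses the single-map error $\tfrac{k(k+1)}{2}c_fT_f(r)$; running your part (i) computation with it yields a threshold of the form $(k+1)+\tfrac{2k}{d}+\cdots$, strictly larger than the stated $\tfrac{3k+1}{d}+\cdots$ whenever $d>1$, so even with the first gap repaired your argument would prove a weaker statement than the theorem. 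Relatedly, your attribution of the coefficient $\tfrac{k^2(k+1)^2}{2}$ to an "amplification" in the general-position/determinant step is incorrect: in the paper it comes straight from the error term of Theorem \ref{thm:main} with $N=k+1$ coordinate charts on $\mathbb{P}^k$, and the uniqueness argument merely adds the error terms for $f$ and $g$. For the record, several of your side observations are sound --- emptiness of the base locus, linear non-degeneracy of $\tilde{f}$, $c_{\tilde{f}}=c_f/d$, and the part (ii) grouping sketch, which does follow the paper's Chen--Yan route via the cyclic pairing $P_i=\sigma_i(f)\sigma_{p(i)}(g)-\sigma_i(g)\sigma_{p(i)}(f)$ --- but the two points above are where the proof as proposed fails.
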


To prove the above result, we need to obtain a result analogoues to that of Tiba's SMT for holomorphic mappings on a disk.

\begin{theorem}\label{thm:main}
	Let $X$ be a $k$-dimensional complex projective algebraic manifold with a positive (1, 1) form $\omega$, and let $L_i:=(Q_i)\ (1\leq i\leq q)$ be a smooth holomorphic line bundle on $X$ with the holomorphic section $Q_i$ such that $\sum_{1\leq i\leq q}L_i$ is a simple normal crossing divisor of $X$. Let $\Lambda=\{(\Lambda_j, U_j)\}_{1\leq j\leq N}$ be a meromorphic partial projective connection relative to an affine covering $\{U_j\}$ of $X$, and let $\beta$ be a holomorphic section of the holomorphic line bundle $L$ on $X$ such that $\beta\Lambda_j$ is holomorphic on $U_j$ for all $j$.  Let $f=(f_1, \dots, f_k):\D_R\rightarrow X$ be a holomorphic map with $c_{f, \omega}<\infty$ such that $f(\D_R)$ is not contained in the polar locus $\mathrm{supp}(\Lambda)_{\infty}$ of $\Lambda$ and we have the Wronskian $$W_{\Lambda}(f):=f'\wedge\Lambda_{f'}f'\wedge\dots\wedge\Lambda_{f'}^{(k-1)}f'\not\equiv 0,$$
	where $\Lambda^{(m)}:=\underbrace{\Lambda\circ\dots\circ\Lambda}_{m-\mathrm{times}}$. Suppose that $\mathrm{supp}(Q_i)$ is not contained in $\mathrm{supp}(\Lambda)_{\infty}$ and $L_i$ is totally geodesic with respect to $\Lambda$ for $1\leq i\leq q$. Then 
	$$\sum_{1\leq i\leq q}T_f(r, L_i)+T_f(r, K_X)-\frac{1}{2}k(k-1)T_f(r, L)\leq \sum_{1\leq i\leq q}N_k(r, f^*(Q_i))+S_f(r),$$ where, for any $\epsilon>0$, $$S_f(r)=O(\log^+r+\log^+T_f(r)+\sum_{1\leq m\leq k}\log^+T_{f_m}(r))+\frac{Nk^2(k+1)}{2}(1+\epsilon)(c_{f, \omega}+\epsilon)T_f(r)\ \|.$$ Here $\|$ means that the inequality holds outside a set $E\subset(0, R)$ with $\displaystyle\int_E\exp((c_{f, \omega}+\epsilon)T_f(r))dr<\infty$. 
\end{theorem}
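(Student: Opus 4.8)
The plan is to follow Tiba's proof of the Second Main Theorem \cite{Tib12}, which recasts the estimate as a Cartan-type argument built on the covariant Wronskian $W_\Lambda(f)$, and to substitute for every appeal to the classical logarithmic derivative lemma on $\C$ the disk version of Ru and Sibony \cite{RS19}. It is precisely this substitution that generates the growth-index contribution $\frac{Nk^2(k+1)}{2}(1+\epsilon)(c_{f,\omega}+\epsilon)T_f(r)$ inside $S_f(r)$, while the remaining terms of $S_f(r)$ are the usual $O(\log^+ r+\log^+T_f(r)+\sum_m\log^+ T_{f_m}(r))$ error. I would fix the affine covering $\{U_j\}_{1\le j\le N}$ and work chart by chart, writing $\Lambda_j=d+\Gamma_j$ on $U_j$; since $\beta\Lambda_j$ is holomorphic, every pole of $\Gamma_j$ is bounded by the zero divisor of $\beta$, that is, by the line bundle $L$.

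The algebraic heart is the totally geodesic condition. For each $i$, Definition \ref{def:tgh} applied to the local defining section $s_i$ of $L_i=(Q_i)$ gives
$$\Lambda^*(ds_i)=d^2s_i-ds_i\circ\Gamma=a_i\otimes ds_i+ds_i\otimes b_i+s_ic_i,$$
with $a_i,b_i,c_i$ meromorphic and with polar loci avoiding $\mathrm{supp}(Q_i)$. Pulling back along $f$ and iterating this relation, I can express each covariant derivative $\Lambda_{f'}^{(m)}s_i(f)$ for $0\le m\le k-1$ as a combination of $s_i(f)$ and its first covariant derivative, with meromorphic coefficients whose poles lie only on $\mathrm{supp}(\Lambda)_\infty$. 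This is exactly the mechanism by which the nonlinear hypersurfaces behave, for Nevanlinna-theoretic purposes, like the hyperplanes in Cartan's original argument.

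Grouping the sections $k+1$ at a time as permitted by the general-position hypothesis and forming the covariant Wronskian of $s_{i_0}(f),\dots,s_{i_k}(f)$, the recursion above collapses the determinant into a product of the $s_{i_\ell}(f)$ times $W_\Lambda(f)$ times a meromorphic factor supported on the polar locus. The cumulative use of the connection in assembling $W_\Lambda(f)$ involves the Christoffel symbols $1+2+\cdots+(k-1)=\tfrac12 k(k-1)$ times, each occurrence contributing a factor with pole controlled by $\beta$; this is the source of the term $-\tfrac12 k(k-1)T_f(r,L)$ on the left-hand side. Taking logarithmic norms, integrating over $|z|=r$, and applying Jensen's formula together with the First Main Theorem converts the left-hand side into $\sum_i T_f(r,L_i)+T_f(r,K_X)-\tfrac12 k(k-1)T_f(r,L)$ and the zero divisor of $W_\Lambda(f)$ into the truncated counting functions $\sum_i N_k(r,f^*(Q_i))$, the truncation at level $k$ arising because a high-order zero of $s_i(f)$ is absorbed by the Wronskian only up to order $k$. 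The surviving proximity terms are logarithmic derivatives $m\big(r,\Lambda_{f'}^{(m)}s_i(f)/s_i(f)\big)$, to which I apply the Ru--Sibony disk lemma.

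The main obstacle will be the careful bookkeeping of the growth-index coefficient and the management of a single exceptional set. The Ru--Sibony lemma bounds an $m$-th logarithmic derivative by the error above plus a weight proportional to $m(1+\epsilon)(c_{f,\omega}+\epsilon)T_f(r)$; summing the weight $m$ over the derivative orders $1\le m\le k$ appearing in $W_\Lambda(f)$ gives $\sum_{m=1}^k m=\tfrac12 k(k+1)$, a further factor of $k$ arises from the $k$ affine coordinate functions $f_1,\dots,f_k$, and a factor of $N$ from patching the estimates across the $N$ charts of the covering, producing exactly $\frac{Nk^2(k+1)}{2}$. The delicate points are, first, to keep the finitely many exceptional sets compatible so that one set $E$ with $\int_E\exp((c_{f,\omega}+\epsilon)T_f(r))\,dr<\infty$ serves for all applications, and second, to verify that the polar contributions of $\Lambda$ and of $a_i,b_i,c_i$ are genuinely absorbed into the $N_k$ and $T_f(r,L)$ terms rather than leaking into $S_f(r)$. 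The hypotheses that $\mathrm{supp}(Q_i)\not\subset\mathrm{supp}(\Lambda)_\infty$, that $\sum_i L_i$ is simple normal crossing, and that $\beta\Lambda_j$ is holomorphic are exactly what guarantee this absorption and the nonvanishing $W_\Lambda(f)\not\equiv 0$ needed for the argument to start.
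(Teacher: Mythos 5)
Your overall plan (run Tiba's argument and replace each appeal to the classical logarithmic derivative lemma by the Ru--Sibony disk version with $\gamma(r)=\exp((c_{f,\omega}+\epsilon)T_f(r))$) is indeed the paper's strategy, and your bookkeeping agrees with the paper's: the weight sum $\sum_{l=1}^{k}l=\tfrac12k(k+1)$, the extra factor $k$ from the coordinate functions, the factor $N$ from the charts, the power $\beta^{k(k-1)/2}$ producing $-\tfrac12k(k-1)T_f(r,L)$, and the truncation at level $k$ from the order estimate. However, the algebraic core of your argument is not the paper's, and as stated it fails. You propose to group the sections $k+1$ at a time ``as permitted by the general-position hypothesis'' and to collapse the covariant Wronskian of $s_{i_0}(f),\dots,s_{i_k}(f)$ into $\prod_{\ell}s_{i_\ell}(f)\cdot W_\Lambda(f)\cdot(\text{meromorphic factor})$. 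Theorem \ref{thm:main} has no general-position hypothesis: the $Q_i$ are sections of possibly different line bundles $L_i$ on an arbitrary projective manifold $X$, the only geometric assumption is that $\sum_i L_i$ is a simple normal crossing divisor, and $q$ may well be smaller than $k+1$, so the grouping is not even defined. Moreover the claimed collapse is unjustified: Lemma \ref{lem:1} writes $(s_i\circ f)^{(m)}$ as a triangular combination of $s_i\circ f$ and $ds_i\cdot\Lambda^{(l)}_{f'}f'$, $l\le m-1$, but the coefficients $\gamma_m,\gamma_{l,m}$ depend on $i$ (through the forms $a_i,b_i,c_i$ in the totally geodesic relation), so the matrix of derivatives of the $k+1$ pulled-back sections is not a single triangular matrix times a jet matrix, and its determinant does not factor as in Cartan's linear case; in the flat linear case the $\gamma$'s vanish and the $ds_i$ are constant, which is precisely why Cartan's identity holds there and only there.

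The paper's proof avoids this step entirely. It takes a partition of unity $\{\phi_j\}$ subordinate to a shrinking $\{V_j\}$ of the affine cover and, on each $U_j$, uses the simple normal crossing hypothesis to choose coordinates $z_1,\dots,z_k$ with $U_j\cap\bigcup_i\mathrm{supp}(Q_i)=\{z_1\cdots z_p=0\}$. The single quantity $\log^+\bigl(\|W_\Lambda(f)\|\,\|\beta(f)\|^{k(k-1)/2}/\prod_i\|Q_i(f)\|\bigr)$ is then expanded as a determinant whose first $p$ columns are divided by $f_1,\dots,f_p$, and Lemma \ref{lem:1} is applied to the coordinate functions themselves (which are the local defining sections of the divisor) to convert the covariant entries $((\Lambda_j)^{(m)}_{f'}f')_i$, $i\le p$, into combinations $\sum_{l}a_{i,m,l}\,d^lf_i/dz^l$ with $a_{i,m,l}(\beta\circ f)^m$ holomorphic; only at that point do genuine logarithmic derivatives $f_i^{(l)}/f_i$ appear, handled by Lemma \ref{lem:LDL}, while the non-divisor directions are handled by splitting $\log^+|f_m^{(l)}|\le\log^+|f_m'|+\log^+|f_m^{(l)}/f_m'|$ and applying Lemma \ref{lem:Cl} to the first term. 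To repair your write-up, replace the grouping step by this local argument adapted to the simple normal crossing structure; the rest of what you wrote (error-term constant, $\beta$-power, order estimate yielding $N_k$, and the final Jensen/First Main Theorem step) then goes through as you describe.
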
  

If $X=\mathbb{P}^k$, then we have the following result: 
\begin{theorem}\label{thm:2} 
	For each $j=1, \dots, q$, let $\sigma_j$ be an element of the linear system $|\{S_0, \dots, S_k\}|$ such that the divisor $(\sigma_j)$ is smooth and $\sum_{1\leq j\leq q}(\sigma_j)$ is a simple normal crossing divisor.
	
	Let $f=[f_0: f_1:\cdots:f_k]:\D_R\rightarrow \mathbb{P}^k$ be a non-constant holomorphic map with $c_f<\infty$ such that its image is neither contained in the support of an element of linear system $|\{S_0, \dots, S_n\}|$ nor contained in the polar locus of $\Lambda$. Then we have 
	$$\left(q-\frac{k+1}{d}-\frac{1}{2d}(k-1)k(k+1)(d-1)\right)T_f(r, dH) \leq \sum_{1\leq j\leq q}N_k(r, f^*(\sigma_j))+S_f(r),$$ where $H$ is a hyperplane bundle on $\mathbb{P}^k$, and, for any $\epsilon>0$, $$S_f(r)=O(\log^+T_f(r) +\log^+r)+\frac{k^2(k+1)^2}{2}(1+\epsilon)(c_f+\epsilon)T_f(r)\ \|.$$ 
\end{theorem}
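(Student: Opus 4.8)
The plan is to derive Theorem~\ref{thm:2} by specializing Theorem~\ref{thm:main} to $X=\mathbb{P}^k$ and evaluating every term against the hyperplane bundle $H$. First I would take the line bundles $L_i=(\sigma_i)$, where each $\sigma_i$ is an element of $|\{S_0,\dots,S_k\}|$; since the $S_\mu$ are homogeneous of degree $d$, every $\sigma_i$ is a section of $dH$, so $T_f(r,L_i)=d\,T_f(r,H)$ and $\sum_{i=1}^q T_f(r,L_i)=qd\,T_f(r,H)$. Using $K_{\mathbb{P}^k}=-(k+1)H$ gives $T_f(r,K_X)=-(k+1)T_f(r,H)$ up to $O(1)$.

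The one genuine bookkeeping step is to pin down the bundle $L$ carrying the section $\beta$ that clears the poles of the connection $\Lambda$ defined by~(\ref{eqn:2}). Solving~(\ref{eqn:2}) by Cramer's rule, each Christoffel symbol $\widetilde{\Gamma}^{\lambda}_{ij}$ equals $\det(M_\lambda)/\det(M)$, where $M=(\partial S_\mu/\partial X_i)$ and $M_\lambda$ replaces one degree-$(d-1)$ column of $M$ by the degree-$(d-2)$ column $(\partial^2 S_\mu/\partial X_i\partial X_j)_\mu$. Hence $\det(M)$ is homogeneous of degree $(k+1)(d-1)$ and cuts out the polar locus of $\Lambda$; choosing $\beta=\det(M)$ gives $L=(k+1)(d-1)H$ and $T_f(r,L)=(k+1)(d-1)T_f(r,H)$. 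Substituting these three evaluations into the left-hand side of Theorem~\ref{thm:main} produces
$$\Big[qd-(k+1)-\tfrac12 k(k-1)(k+1)(d-1)\Big]T_f(r,H),$$
and rewriting $T_f(r,H)=\tfrac1d T_f(r,dH)$ yields exactly the coefficient claimed in Theorem~\ref{thm:2}. On the error side, the standard covering of $\mathbb{P}^k$ by its $k+1$ affine charts gives $N=k+1$, so $\tfrac{Nk^2(k+1)}{2}$ becomes $\tfrac{k^2(k+1)^2}{2}$; the term $\sum_m \log^+ T_{f_m}(r)$ is absorbed into $O(\log^+ T_f(r))$ because each coordinate obeys $T_{f_m}(r)\le T_f(r)+O(1)$.

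Before invoking Theorem~\ref{thm:main} I must verify its two structural hypotheses. The totally geodesic condition is immediate: for $\sigma=\sum_\mu \alpha_\mu S_\mu$ one has $\Lambda^*(d\sigma)_{ij}=\sum_\mu \alpha_\mu\big(\partial^2 S_\mu/\partial X_i\partial X_j-\sum_\lambda (\partial S_\mu/\partial X_\lambda)\widetilde{\Gamma}^{\lambda}_{ij}\big)$, which vanishes identically by~(\ref{eqn:2}); thus every element of the linear system is totally geodesic with respect to $\Lambda$, as in Demailly's construction and Tiba~\cite{Tib12}. It then remains to confirm the Wronskian nondegeneracy $W_\Lambda(f)\not\equiv 0$ from the hypothesis that the image of $f$ is contained in no element of $|\{S_0,\dots,S_k\}|$.

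The hard part will be this last verification. Since $\Lambda$ is constructed so that the $S_\mu$ are $\Lambda$-affine, I expect the connection-twisted Wronskian $W_\Lambda(f)$ to coincide, up to a factor that is nonvanishing off the polar locus, with the ordinary Wronskian of $(S_0\circ f,\dots,S_k\circ f)$. That ordinary Wronskian is not identically zero precisely when $S_0\circ f,\dots,S_k\circ f$ are linearly independent over $\mathbb{C}$, i.e.\ when the image of $f$ lies in no hypersurface $\{\sum_\mu \alpha_\mu S_\mu=0\}$, which is our hypothesis. Making this comparison rigorous---tracking how the iterated covariant derivatives $\Lambda^{(m)}_{f'}f'$ translate into ordinary derivatives of the $S_\mu\circ f$ in the flat $S$-coordinates and controlling the conversion factor away from $\mathrm{supp}(\Lambda)_\infty$---is the only nontrivial point; once it is established, Theorem~\ref{thm:2} follows at once from Theorem~\ref{thm:main}.
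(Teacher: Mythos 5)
Your proposal is correct and follows essentially the same route as the paper: specialize Theorem \ref{thm:main} to $X=\mathbb{P}^k$ with the standard affine covering ($N=k+1$), use $K_{\mathbb{P}^k}=-(k+1)H$, $T_f(r,L_i)=d\,T_f(r)$, and the Cramer's-rule degree count $\deg\delta=(k+1)(d-1)$ giving $T_f(r,L)\leq(k+1)(d-1)T_f(r)$, with $T_{f_i/f_0}(r)\leq T_f(r)+O(1)$ absorbing the coordinate terms. The step you flag as the hard part --- deducing $W_{\Lambda}(f)\not\equiv 0$ and the total geodesy of the $(\sigma_j)$ from the nondegeneracy hypotheses --- is precisely what the paper outsources to Section 3 of Tiba \cite{Tib12}, and your sketch (Euler's relation plus the defining equation (\ref{eqn:2}) factor the ordinary Wronskian of $S_0\circ f,\dots,S_k\circ f$ into $\delta(f)$ times the connection Wronskian, and holomorphic functions are linearly dependent over $\mathbb{C}$ exactly when their Wronskian vanishes identically) is exactly Tiba's argument there.
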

\begin{remark} When the degree of $S_i$ is equal to 1, one can choose $(S_i)$ to be the coordinate hyperplane, and then the meromorphic connection constructed becomes the flat connection. Thus any hyperplane is a totally geodesic hypersurface with respect to the flat connection on $\mathbb{P}^k$ and the holomorphic curve $f$ is linearly non-degenerated if and only if $W_{\nabla}(f)\not\equiv 0$. Here $W_{\nabla}(f)$ is actually equivalent to the classical Wronskian of $f$. Hence Theorem \ref{thm:2} can be regarded as a generalization of Cartan's Second Main Theorem.
\end{remark}
\section{Notations and Prelimiraries}\label{sec:2}
In this section, we introduce basic definitions and results for the Nevanlinna theory.

Let $f:\D_R\rightarrow \mathbb{P}^k$ be a nonconstant holomorphic map. Let $E=\sum m_jP_j$ be an effective divisor on $\mathbb{D}_R$, where $P_j$ is a set of discrete points in $\D_R$ and $m_j$ are positive integers. For $0<r<R$, put $n_k(r, E)=\sum_{|P_j|<r}\min\{k, m_j\}$. We define the counting function of $E$ by $$N_k(r, E)=\int_0^r \frac{n_k(t, E)}{t}dt.$$ 
Let $X$ be a complex projective algebraic manifold with dimension $k$ (e.g. $X=\mathbb{P}^k$), and let $D$ be an effective divisor on $X$. Put $L=\mathcal{O}(D)$, where $\mathcal{O}(D)$ denotes the line bundle associated with $D$. Let $\sigma$ be a holomorphic section of $L$ such that $D=\{\sigma=0\}$. We define the proximity function of $f$ with respect to $D$ under the assumption that $f(\D_R)\not\subset D$, by $$m_f(r, D)=\int_0^{2\pi}\log\frac{1}{\|\sigma(f(re^{i\theta}))\|_L}\frac{d\theta}{2\pi},$$ where $\|\cdot\|_L$ is a hermitian metric in $L$. In particular, if $\sigma$ is a homogeneous polynomial of degree $d$, then the proximity function could be defined by $$m_f(r, D)=\int_0^{2\pi}\log\frac{\|f(re^{i\theta})\|^d}{|\sigma(f(re^{i\theta}))|}\frac{d\theta}{2\pi}, $$ where $\|f(re^{i\theta)}\|=\max \{ |f_0(re^{i\theta})|, \dots , |f_k(re^{i\theta})|\}$, and $f=(f_0, \dots, f_k)$ a reduced representation.
The integrated counting function and truncated counting functions of $f$ with respect to $D$ are defined, under the assumption that $f(\D_R)\not\subset D$, by
\begin{eqnarray*}
	N(r, f^*D)=\int_0^r\frac{n(t,  f^*D)-n(0,  f^*D)}{t}dt+n(0,  f^*D)\log r,\\[0.1cm]
	N_k(r, f^*D)=\int_0^r\frac{n_k(t,  f^*D)-n_k(0,  f^*D)}{t}dt+n_k(0,  f^*D)\log r,
\end{eqnarray*}
where $n(t, f^*D)$ equals the number of points of $f^{-1}(D)$ in the disc $|z|<t$, counting multiplicity, $n(0,f^*D)=\lim_{t\rightarrow 0}n(t, f^*D)$, and $n_k(t, f^*D)=\min\{k, n(t, f^*D)\}$. 
Similarly, for a set $S\subset\mathbb{D}_R$ with $ \#(S\cap\{ |z|<r\})<\infty$ for every $0<r<R$, we denote by $n(r, S)$  the number of elements of $S$ in the disk with a center at zero and radius $r$, and $$N(r, S)=\int_0^r\frac{n(t,  S)-n(0, S)}{t}dt+n(0, S)\log r.$$ For a line bundle associated with $D$, we define the characteristic function of $T_f(r, \mathcal{O}(D))$ by $$T_f(r, \mathcal{O}(D))=\int_0^r\frac{dt}{t}\int_{|z|\leq t} f^*c_1(\mathcal{O}(D)), $$ where $c_1(\mathcal{O}(D))$ is the Chern form of $\mathcal{O}(D)$. If the line bundle is a hyperlane line bundle $H$, we set $$T_f(r)=T_f(r, H)=\int_0^r\frac{dt}{t}\int_{|z|\leq t} f^*\omega_{FS}=\frac{1}{2\pi}\int_0^{2\pi}\log \|f(re^{i\theta})\|d\theta+O(1),$$
where $\omega_{FS}$ is the Fubini-Study form on $\mathbb{P}^k$.
\section{Proof of Theorem \ref{thm:main} and \ref{thm:2}}
To prove our results, we need the following lemmata provided by Ru and Sibony \cite{RS19}:

\begin{lemma}[Calculus Lemma \cite{RS19}]\label{lem:Cl}
	Let $0<R\leq\infty$ and let $\gamma(r)$ be a nonnegative function defined on $(0, R)$ with $\int_0^R\gamma(r)dr=\infty$. Let $h$ be a nondecreasing function of class $C^1$ defined on $(0, R)$. Assume that $\lim_{r\rightarrow R}h(r)=\infty$ and $h(r_0)\geq c>0$. Then, for every $0<\delta<1$, the inequality $$h'(r)\leq h^{1+\delta}(r)\gamma(r)$$ holds for all $r\in(0, R)$ outside a set $E$ with $\int_E\gamma(r)dr<\infty$.
\end{lemma}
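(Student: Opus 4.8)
The plan is to show that the set on which the desired inequality fails carries only finite $\gamma$-mass, by comparing $\gamma$ with the quantity $h'/h^{1+\delta}$ and integrating. I would begin by introducing the exceptional set
\[
E := \{\, r \in (0,R) : h'(r) > h(r)^{1+\delta}\gamma(r)\,\},
\]
which is exactly the complement of the set on which $h'(r)\le h(r)^{1+\delta}\gamma(r)$ holds; the task then reduces to proving $\int_E \gamma(r)\,dr < \infty$.

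First I would reduce attention to the range $r\ge r_0$. Because $h$ is nondecreasing with $h(r_0)\ge c>0$, we have $h(r)\ge c>0$ for every $r\ge r_0$, so $h(r)^{1+\delta}$ is positive there and the defining inequality for $E$ rearranges to $\gamma(r)<h'(r)/h(r)^{1+\delta}$. Integrating this comparison over $E\cap[r_0,R)$ would give
\[
\int_{E\cap[r_0,R)}\gamma(r)\,dr \le \int_{r_0}^{R}\frac{h'(r)}{h(r)^{1+\delta}}\,dr .
\]
The bounded initial segment $(0,r_0)$ is harmless and would simply be adjoined to $E$: in the intended applications $\gamma$ is bounded near $0$ (there $T_f(r)\to 0$, so $\gamma=\exp(\,\cdot\,T_f)$ stays near $1$), whence $\int_0^{r_0}\gamma<\infty$.

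The core step is the evaluation of the right-hand integral. Since $h\in C^1$ and $-\tfrac{1}{\delta}h^{-\delta}$ is an antiderivative of $h'/h^{1+\delta}$, the fundamental theorem of calculus yields
\[
\int_{r_0}^{R}\frac{h'(r)}{h(r)^{1+\delta}}\,dr=\frac{1}{\delta}h(r_0)^{-\delta}-\frac{1}{\delta}\lim_{r\to R}h(r)^{-\delta}=\frac{1}{\delta}h(r_0)^{-\delta}\le\frac{1}{\delta c^{\delta}}<\infty ,
\]
the boundary term at $R$ vanishing precisely because $\lim_{r\to R}h(r)=\infty$. This is where the three structural hypotheses enter: $h(r_0)\ge c$ keeps the base term finite, $\lim_{r\to R}h=\infty$ kills the term at $R$, and the exponent $1+\delta$ with $\delta>0$ is essential, since with exponent $1$ the antiderivative would be $\log h$, which diverges. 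Adding the two contributions gives $\int_E\gamma<\infty$.

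I do not anticipate a serious analytic obstacle: the whole argument rests on the single monotone estimate above. The only delicate points are bookkeeping --- confirming $h>0$ on the relevant range (which follows from monotonicity and $h(r_0)\ge c$) and disposing of the segment $(0,r_0)$. I would also note that the hypothesis $\int_0^R\gamma=\infty$ is not used for the bound $\int_E\gamma<\infty$ itself; its role is to make the conclusion non-vacuous, for then $(0,R)\setminus E$ has infinite $\gamma$-measure, so the inequality holds on a set accumulating at $R$ --- exactly what the Nevanlinna-theoretic applications later require.
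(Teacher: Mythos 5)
The paper itself contains no proof of this statement: it is quoted as a black box from Ru--Sibony \cite{RS19} and used only through the choice $\gamma(r)=\exp((c_{f,\omega}+\epsilon)T_f(r))$ in the proof of Theorem \ref{thm:main}. So the comparison can only be with the original argument of \cite{RS19}, and your core computation is exactly that argument: on the exceptional set $E=\{h'>h^{1+\delta}\gamma\}$ one has $\gamma<h'/h^{1+\delta}$, and since $-\tfrac{1}{\delta}h^{-\delta}$ is an antiderivative,
\[
\int_{E\cap[r_0,R)}\gamma(r)\,dr\;\le\;\int_{r_0}^{R}\frac{h'(r)}{h(r)^{1+\delta}}\,dr\;=\;\frac{1}{\delta}\,h(r_0)^{-\delta}\;\le\;\frac{1}{\delta c^{\delta}},
\]
the boundary term at $R$ vanishing because $h\to\infty$. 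That part is correct and is the right proof.

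The weak point is your treatment of $(0,r_0)$. You dispose of it by appealing to the intended application ($\gamma$ bounded near $0$), which is not among the hypotheses of the lemma; as a proof of the lemma \emph{as stated} this is a genuine gap, and in fact it cannot be closed, because the statement as transcribed here (positivity of $h$ only at the single point $r_0$) is false. For example, take $R=\infty$, $h(r)=e^{-1/r}$ on $(0,1]$ extended $C^1$-smoothly by $h(r)=e^{r-2}$ for $r\ge 1$, and $\gamma(r)=e^{\delta/(2r)}$ on $(0,1]$, constant afterwards. Then $h$ is nondecreasing, $C^1$, $h(1)=e^{-1}>0$, $h\to\infty$, and $\int_0^\infty\gamma=\infty$; but $h'/(h^{1+\delta}\gamma)=r^{-2}e^{\delta/(2r)}>1$ on all of $(0,1]$, so $E\supset(0,1]$ and $\int_E\gamma=\infty$. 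The original hypothesis in \cite{RS19} is $h(r)\ge c>0$ for \emph{all} $r\in(0,R)$ (the paper's transcription weakened it inadvertently); under that hypothesis your main estimate applies verbatim on all of $(0,R)$, giving $\int_E\gamma\le\frac{1}{\delta c^{\delta}}$ with no patch needed. So you should either assume $h\ge c$ throughout, or restrict the conclusion to $r\in(r_0,R)$ --- either repair makes your argument complete and self-contained; borrowing boundedness of $\gamma$ from the application does not prove the lemma. Your closing remark that $\int_0^R\gamma=\infty$ serves only to make the conclusion non-vacuous is correct.
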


\begin{lemma}[Logarithmic Derivative Lemma \cite{RS19}]\label{lem:LDL}
	Let $R$ and $\gamma(r)$ be as in Lemma \ref{lem:Cl}. Let $f$ be a meromorphic function on $\D_R$. Then, for $l\geq 1$ and $\delta>0$, the inequality 
	\begin{align*}
		\int_0^{2\pi}&\log^+\left|\frac{f^{(l)}}{f}(re^{i\theta})\right|\frac{d\theta}{2\pi}\leq (1+\delta)l\log\gamma(r)+\delta l\log r\\
		&+O(\log T_f(r)+\log\log\gamma(r)+\log\log r)
	\end{align*} holds outside a set $E\subset (0, R)$ with $\int_E\gamma(r)dr<\infty$.
\end{lemma}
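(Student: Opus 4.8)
The plan is to derive this disc version of the logarithmic derivative lemma from the Calculus Lemma (Lemma~\ref{lem:Cl}) by the classical Nevanlinna scheme, with the Borel-type growth lemma replaced throughout by Lemma~\ref{lem:Cl}, so that the exceptional set is controlled in the $\gamma$-weighted sense $\int_E\gamma\,dr<\infty$ rather than in Lebesgue measure. After a preliminary rotation we may assume $f(0)\notin\{0,\infty\}$, and we may assume $T_f(r)\to\infty$ as $r\to R$ (the bounded case being trivial). The argument then has three steps: a reduction to $l=1$, a pointwise fractional estimate feeding into Jensen's inequality, and the elimination of an auxiliary radius via Lemma~\ref{lem:Cl}.

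First I would treat the first-order case $l=1$. Invoking the Poisson--Jensen formula on the circle $|z|=\rho$ with $r<\rho<R$, one bounds a fractional power mean of $f'/f$ (the Gol'dberg--Grinshtein estimate): for $0<s<1$,
\begin{equation*}
\frac{1}{2\pi}\int_0^{2\pi}\Bigl|\frac{f'}{f}(re^{i\theta})\Bigr|^{s}\,d\theta\le C(s)\Bigl(\frac{1}{r(\rho-r)}\Bigr)^{s}\bigl(T_f(\rho)+O(1)\bigr)^{s}.
\end{equation*}
Pulling $\log^+$ through the average by Jensen's inequality (using $\log^+x\le\tfrac1s\log^+x^{s}$ together with the concavity of $t\mapsto\log(1+t)$) yields the basic inequality
\begin{equation*}
m(r,f'/f)\le\log^+T_f(\rho)+\log^+\frac{1}{r}+\log^+\frac{1}{\rho-r}+O(1).
\end{equation*}
The auxiliary radius $\rho$ is then eliminated using the Calculus Lemma. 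Taking $h=T_f$ in Lemma~\ref{lem:Cl} gives $T_f'(r)\le T_f^{1+\delta}(r)\gamma(r)$ off a set $E_1$ with $\int_{E_1}\gamma\,dr<\infty$; applying Lemma~\ref{lem:Cl} a second time to the nondecreasing $C^1$ function $A(r)=rT_f'(r)=\int_{|z|<r}f^*\omega$ controls the growth of $A$, hence of $T_f'$, across a short interval $[r,\rho]$, and lets one fix $\rho=\rho(r)$ so that simultaneously $T_f(\rho)=O\bigl(T_f(r)^{1+\delta}\bigr)$ and $\log^+\tfrac{1}{\rho-r}$ is dominated by the right combination of $\log\gamma(r)$ and $\log r$, all valid off $E=E_1\cup E_2$, which still has finite $\gamma$-measure. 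Substituting and matching exponents produces exactly $(1+\delta)\log\gamma(r)+\delta\log r+O(\log T_f(r)+\log\log\gamma(r)+\log\log r)$ for $l=1$.

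Next I would pass from $l=1$ to general $l$. Factoring $\frac{f^{(l)}}{f}=\prod_{j=1}^{l}\frac{(f^{(j-1)})'}{f^{(j-1)}}$ and using the subadditivity of $\log^+$ on products gives
\begin{equation*}
\int_0^{2\pi}\log^+\Bigl|\frac{f^{(l)}}{f}(re^{i\theta})\Bigr|\frac{d\theta}{2\pi}\le\sum_{j=1}^{l}\int_0^{2\pi}\log^+\Bigl|\frac{(f^{(j-1)})'}{f^{(j-1)}}(re^{i\theta})\Bigr|\frac{d\theta}{2\pi},
\end{equation*}
a sum of $l$ first-order proximity terms. Applying the first-order case to each $f^{(j-1)}$ and using the first-order case itself (inductively on the order) to see that $N(r,(f^{(j-1)})')\le j\,N(r,f)+O(\cdots)$ forces $T_{f^{(j-1)}}(r)=O(T_f(r))$ up to errors of size $O(\log^+T_f(r)+\log^+r)$, the $l$ copies of $\log\gamma(r)$ and of $\log r$ add up to the factors $(1+\delta)l$ and $\delta l$, while the extra growth of each $T_{f^{(j-1)}}$ is absorbed into the $O(\log T_f(r))$ term. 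This produces the multiplier $l$ in the conclusion.

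I expect the main obstacle to be precisely the bookkeeping in the conversion step: choosing $\rho=\rho(r)$ so that the two applications of Lemma~\ref{lem:Cl} deliver the sharp coefficients $(1+\delta)$ on $\log\gamma(r)$ and $\delta$ on $\log r$, while guaranteeing that the interval $[r,\rho]$ avoids the exceptional sets for all $r$ outside a set of finite $\gamma$-measure. This is the familiar difficulty that a Borel/Calculus-type inequality holds only off a small set but must nonetheless be integrated over an interval, and it is exactly the finite-$\gamma$-measure formulation of Lemma~\ref{lem:Cl} that makes the estimate available on the disc $\D_R$ of finite radius, where the classical Lebesgue-measure Borel lemma does not apply.
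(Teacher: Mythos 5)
The paper itself contains no proof of this lemma: it is quoted verbatim from Ru--Sibony \cite{RS19}, so your attempt can only be measured against the proof in that reference and the standard Nevanlinna scheme. In outline you do follow the same route: the Gol'dberg--Grinshtein/Poisson--Jensen fractional-mean estimate with a free auxiliary radius $\rho$, the telescoping factorization $f^{(l)}/f=\prod_{j=1}^{l}(f^{(j-1)})'/f^{(j-1)}$ with the bound $T(r,f^{(j)})\le 2T(r,f^{(j-1)})+m\bigl(r,(f^{(j-1)})'/f^{(j-1)}\bigr)$, and removal of $\rho$ via the $\gamma$-weighted Calculus Lemma so that the exceptional set has finite $\gamma$-measure. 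One bookkeeping slip in the induction: the first-order proximity terms contribute $\log\gamma(r)$ and $\log r$ to $T_{f^{(j-1)}}(r)$, so after taking logarithms these are absorbed into the $O(\log\log\gamma(r)+\log\log r)$ terms of the statement, not into $O(\log T_f(r))$ alone as you assert; this is harmless only because those terms are present in the conclusion.

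The genuine gap is in your mechanism for eliminating $\rho$. Lemma \ref{lem:Cl} yields the pointwise differential inequality $h'(r)\le h^{1+\delta}(r)\gamma(r)$ only for $r$ \emph{outside} an exceptional set $E_2$ of finite $\gamma$-measure, and the lemma places no constraint on where $E_2$ sits. Hence a second application of it to $A(r)=rT_f'(r)$ cannot, as you claim, "control the growth of $A$, hence of $T_f'$, across a short interval $[r,\rho]$": the entire interval $[r,\rho]$ may lie inside $E_2$, so the inequality cannot be integrated from $r$ to $\rho$ to conclude $T_f(\rho)=O\bigl(T_f(r)^{1+\delta}\bigr)$. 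What closes this step is not the derivative form but a Borel-type \emph{growth} lemma for the monotone function $T_f$ itself --- for instance, choosing $\rho(r)$ by $\int_r^{\rho(r)}\gamma(t)\,dt=T_f(r)^{-\delta}$ and proving by the same measure-telescoping argument that establishes Lemma \ref{lem:Cl} that $T_f(\rho(r))\le T_f(r)^{1+\delta}$ off a set of finite $\gamma$-measure, together with the bound of $\log\frac{1}{\rho(r)-r}$ by $\log\gamma$ and $\delta\log T_f$ terms --- or, alternatively, an arrangement in which the quantity to be estimated is rewritten as a derivative in $r$ of an increasing function and Lemma \ref{lem:Cl} is applied twice \emph{at the same radius} $r$, in the style of the paper's own proof of Theorem \ref{thm:main}, so that no interval ever has to avoid the exceptional set. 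You correctly flag this conversion as "the main obstacle," but the fix you propose does not surmount it as written.
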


The following lemma was obtained by Demaily \cite{Dem97}:
\begin{lemma}[\cite{Dem97}, \cite{Tib12}]\label{lem:1}
	Assume that $D=(s)$ is totally geodesic with respect to $\Lambda$ on $U$. Let $\beta$ be a holomorphic function on $U$ such that $\beta a, \beta b, \beta c$ are holomorphic forms, where $a, b, c$ are defined as in Definition \ref{def:tgh}. Let $V$ be a domain in $\C$. Let $f:V\rightarrow U$ be a holomorphic map. Then, for $m\in\N$, we have that $$(s\circ f)^{(m)}=\gamma_ms\circ f+\sum_{0\leq l\leq m-2}\gamma_{l,m}ds\cdot\Lambda_{f'}^{(l)}f'+ds\cdot\Lambda_{f'}^{(m-1)}f'$$ in $V$. Here $\gamma_m$ and $\gamma_{l, m}$ are meromorphic on $V$ such that $\beta^{m-1}(f)\gamma_m$ and $\beta^{m-l-1}(f)\gamma_{l,m}$ are holomorphic on $V$.
\end{lemma}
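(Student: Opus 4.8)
The plan is to prove the formula by induction on $m$, regarding the quantities $D_l:=ds\cdot\Lambda_{f'}^{(l)}f'$ $(l\ge 0)$ as building blocks and the functions $\gamma_m,\gamma_{l,m}$ as coefficients whose poles along $f^{-1}(D)$ are to be controlled by powers of $\beta(f)$. Throughout I fix the coordinates $(z_1,\dots,z_k)$ on $U$ from Definition \ref{def:tgh}, write $f=(f_1,\dots,f_k)$, abbreviate $s_{,\lambda}=\partial s/\partial z_\lambda$, $s_{,\mu\nu}=\partial^2s/\partial z_\mu\partial z_\nu$, and set $(\Lambda_{f'}\xi)^\lambda=(\xi^\lambda)'+\sum_{\mu,\nu}\Gamma^\lambda_{\mu\nu}(f)f_\mu'\xi^\nu$ for a vector field $\xi$ along $f$, so that $\Lambda_{f'}^{(l+1)}f'=\Lambda_{f'}(\Lambda_{f'}^{(l)}f')$. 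The base case $m=1$ is simply $(s\circ f)'=ds\cdot f'=D_0$, i.e. $\gamma_1=0$.

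The engine of the induction is a single differentiation identity. For any vector field $\xi$ along $f$, the Leibniz rule together with the symmetry $s_{,\mu\nu}=s_{,\nu\mu}$ and the definition of $\Lambda_{f'}$ gives
\[
(ds\cdot\xi)'=\sum_{\mu,\nu}\Big(s_{,\mu\nu}-\sum_\lambda s_{,\lambda}\Gamma^\lambda_{\mu\nu}\Big)(f)\,f_\mu'\,\xi^\nu+ds\cdot\Lambda_{f'}\xi=(\Lambda^*ds)(f',\xi)+ds\cdot\Lambda_{f'}\xi,
\]
the second equality being the defining formula $\Lambda^*(ds)=d^2s-ds\circ\Gamma$ of Definition \ref{def:tgh}. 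Substituting the totally geodesic hypothesis $\Lambda^*(ds)=a\otimes ds+ds\otimes b+sc$ and specialising to $\xi=\Lambda_{f'}^{(l)}f'$ turns this into the recursion
\[
D_l'=a(f')\,D_l+b(\Lambda_{f'}^{(l)}f')\,D_0+c(f',\Lambda_{f'}^{(l)}f')\,(s\circ f)+D_{l+1},
\]
so that differentiating a level-$l$ block reproduces that block (coefficient $a(f')$), the level-$0$ block, a multiple of $s\circ f$, and the next block $D_{l+1}$.

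Before iterating I record the polar bounds that make the $\beta$-powers balance. Because $\beta$ clears the poles of $\Lambda$ on $U$ (so that $\beta(f)\Gamma^\lambda_{\mu\nu}(f)$ is holomorphic, as guaranteed by the ambient hypothesis $\beta\Lambda_j$ holomorphic under which the lemma is applied), a short induction on the defining formula for $\Lambda_{f'}^{(l+1)}f'$ — differentiation raises the pole order by one and each factor $\Gamma(f)$ by one — shows that $\beta^l(f)\Lambda_{f'}^{(l)}f'$ is holomorphic. Together with $\beta a,\beta b,\beta c$ holomorphic this yields, for every $l\ge 0$, that $\beta(f)a(f')$, $\beta^{l+1}(f)b(\Lambda_{f'}^{(l)}f')$, $\beta^{l+1}(f)c(f',\Lambda_{f'}^{(l)}f')$ and $\beta^l(f)D_l$ are all holomorphic. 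For the inductive step I then differentiate the formula for $m$, apply the product rule to $\gamma_m(s\circ f)$ and to each $\gamma_{l,m}D_l$, and insert the recursion for $D_l'$. Collecting into the $s\circ f$-slot, the level-$j$ slots $(0\le j\le m-1)$ and the new level-$m$ slot produces explicit recursions, for instance $\gamma_{j,m+1}=\gamma_{j,m}'+\gamma_{j,m}a(f')+\gamma_{j-1,m}$ for $1\le j\le m-1$ with level-$m$ coefficient $1$, while $\gamma_{m+1}$ and $\gamma_{0,m+1}$ absorb in addition the $c(f',\cdot)$- and $b(\cdot)$-contributions respectively. The holomorphy of $\beta^{m-j}(f)\gamma_{j,m+1}$, $\beta^m(f)\gamma_{m+1}$ and $\beta^m(f)\gamma_{0,m+1}$ then follows by adding pole orders: a derivative costs one factor of $\beta$, $a(f')$ and the $\gamma'$-terms one factor, and each $b(\Lambda_{f'}^{(l)}f')$ or $c(f',\Lambda_{f'}^{(l)}f')$ costs $l+1$ factors, which exactly offsets the $m-l-1$ already known for $\gamma_{l,m}$ to total $m$. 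This closes the induction.

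The routine but error-prone part, and the main obstacle, is the slot bookkeeping in the inductive step: one must verify that no block beyond $s\circ f,D_0,\dots,D_m$ is generated, that the two distinct level-$0$ contributions (via $a$ from the $l=0$ block, and via $b$ from every block) are both folded into $\gamma_{0,m+1}$, and that every $\beta$-exponent lands exactly on the asserted value. The single place where a slip would propagate through the whole recursion is the application of the totally geodesic identity with the correct placement of $f'$ in the first slot and $\Lambda_{f'}^{(l)}f'$ in the second, so that $a$ pairs with $f'$ and $b$ with the iterated covariant derivative.
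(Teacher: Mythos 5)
Your proof is correct and is essentially the argument behind the cited result: the paper itself gives no proof of Lemma \ref{lem:1}, quoting it from Demailly \cite{Dem97} and Tiba \cite{Tib12}, and their proof is exactly your induction via the identity $(ds\cdot\xi)'=(\Lambda^*ds)(f',\xi)+ds\cdot\Lambda_{f'}\xi$, the totally geodesic relation, the resulting recursion $D_l'=a(f')D_l+b(\Lambda_{f'}^{(l)}f')D_0+c(f',\Lambda_{f'}^{(l)}f')(s\circ f)+D_{l+1}$, and the pole bookkeeping in powers of $\beta(f)$. You were also right to note that the hypotheses as stated ($\beta a,\beta b,\beta c$ holomorphic) do not by themselves control the poles of $\Lambda_{f'}^{(l)}f'$ entering through the $b$- and $c$-terms — the asserted holomorphy of $\beta^{m-l-1}(f)\gamma_{l,m}$ additionally requires $\beta\Gamma$ holomorphic, which Tiba's version of the lemma includes explicitly and which this paper supplies in the application (the hypothesis in Theorem \ref{thm:main} that $\beta\Lambda_j$ is holomorphic on $U_j$).
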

\subsection{Proof of Theorem \ref{thm:main}} We follow the argument of \cite{Tib12}. Let $\{V_j\}_{1\leq j\leq N}$ be an open covering of $X$ such that the topological closure $\bar{V}_j$ is contained in $U_j$ and $\bar{V}_j$ is compact. Then, we have a partition of unity $\{\phi_j\}_{1\leq j\leq N}$ subordinate to the covering $\{V_j\}$. Take holomorphic function $z_1, \dots, z_k$ on $U_j$ such that $dz_1, \dots, dz_k$ are linearly independent and 
$$U_j\cap\bigcup_{i=1}^q\mathrm{supp}(Q_i)=\{w\in U_j|z_1(w)\cdots z_p(w)=0\}$$ for some $p,\ 0\leq p\leq k$. Set $$f_l=z_l\circ f, \quad (\Lambda_{f'}^{(m)}f')_l=dz_l\cdot\Lambda_{f'}^{(m)}f'.$$ Then we have 
\begin{align*}
	\phi_j(f)&\log^+\frac{\|W_{\Lambda}(f)\|_{\wedge^nTX}\|\beta(f)\|^{k(k-1)/2}_L}{\prod_{i=1}^q\|Q_i(f)\|_{L_j}}\\
	&=\phi_j(f)\log^+\big(\Phi_j(f)\|\beta(f)\|_L^{k(k-1)/2}\\
	&\times\begin{vmatrix}
		\frac{f_1'}{f_1} & \cdots & \frac{f_p'}{f_p} & f_{p+1} &\cdots &f_k' \\ 
		\frac{((\Lambda_j)_{f'}f')_1}{f_1} & \cdots & \frac{((\Lambda_j)_{f'}f')_p}{f_p} & ((\Lambda_j)_{f'}f')_{p+1} &\cdots &((\Lambda_j)_{f'}f')_k \\ 
		\cdot & \cdot & \cdot & \cdot &\cdot &\cdot\\
		\cdot & \cdot & \cdot & \cdot &\cdot &\cdot \\ 
		\cdot & \cdot & \cdot & \cdot &\cdot &\cdot\\
		\frac{((\Lambda_j)^{(k-1)}_{f'}f')_1}{f_1} & \cdots & \frac{((\Lambda_j)^{(k-1)}_{f'}f')_p}{f_p} & ((\Lambda_j)^{(k-1)}_{f'}f')_{p+1} &\cdots &((\Lambda_j)^{(k-1)}_{f'}f')_k
	\end{vmatrix}\big)
\end{align*}
on $f^{-1}(U_j)$, where $\Phi_j$ is a $C^{\infty}$-function on $U_j$. By Lemma \ref{lem:1},
$$((\Lambda_j)^{(m)}_{f'}f')_i=\sum_{0\leq l\leq m+1}a_{i, m, l}(z)\frac{d^lf_i}{dz^l}(z)$$ for $1\leq i\leq p$, where $a_{i,m,l}$ are meromorphic functions on $f^{-1}(U_j)$ such that $a_{i, m,l}(\beta\circ f(z))^m$ is a holomorphic function. Hence it follows that, for $0<r<R$,

\begin{align*}
	K_j:&=\int_0^{2\pi}\phi_j(f)\log^+\frac{\|W_{\Lambda}(f)\|_{\wedge^nTX}\|\beta(f)\|^{k(k-1)/2}_L}{\prod_{i=1}^q\|Q_i(f)\|_{L_j}}(re^{i\theta})\frac{d\theta}{2\pi}\\
	&\leq \int_0^{2\pi}\Phi(f)(re^{i\theta})\frac{d\theta}{2\pi}+\sum_{1\leq m\leq p}\sum_{1\leq l\leq k}\int_0^{2\pi}\log^+\left|\frac{f^{(l)}_m}{f_m}(re^{i\theta})\right|\frac{d\theta}{2\pi}\\
	&+\sum_{p+1\leq m\leq k}\sum_{1\leq l\leq k}\int_0^{2\pi}\Psi(f(re^{i\theta}))\log^+|f^{(l)}_m(re^{i\theta})|\frac{d\theta}{2\pi},
\end{align*}
where $\Phi$ and $\Psi$ are bounded $C^{\infty}$-functions on $X$. By using the Logarithmic Derivative Lemma (Lemma \ref{lem:LDL}) with $\gamma(r):=\exp((c_{f, \omega}+\epsilon)T_f(r))$, it follows that 
\begin{align*}
	&\int_0^{2\pi}\log^+\left|\frac{f^{(l)}_m}{f_m}(re^{i\theta})\right|\frac{d\theta}{2\pi}\leq S_f(r, l)\ \|,\\
	&\int_0^{2\pi}\Psi(f(re^{i\theta}))\log^+|f^{(l)}_m(re^{i\theta})|\frac{d\theta}{2\pi}\\
	&\leq \int_0^{2\pi}\Psi(f(re^{i\theta}))\log^+|f_m'(re^{i\theta})|\frac{d\theta}{2\pi}+\int_0^{2\pi}\log^+\left|\frac{f^{(l)}_m}{f'_m}(re^{i\theta})\right|\frac{d\theta}{2\pi}\\
	&\leq \int_0^{2\pi}\Psi(f(re^{i\theta}))\log^+|f_m'(re^{i\theta})|\frac{d\theta}{2\pi}+S_f(r, l-1),
\end{align*}
where $$S_f(r, l)=(1+\epsilon)l(c_{f, \omega}+\epsilon)T_f(r)+\epsilon l\log r+O(\log T_{f_m}(r))\ \|.$$
Also, 
\begin{align*}
	\int_0^{2\pi}\Psi(f(re^{i\theta}))\log^+|f_m'(re^{i\theta})|\frac{d\theta}{2\pi}&=\frac{1}{2} \int_0^{2\pi}\Psi(f(re^{i\theta}))\log^+|f_m'(re^{i\theta})|^2\frac{d\theta}{2\pi}\\
	&\leq  \frac{1}{2}\int_0^{2\pi}\log^+\|f'(re^{i\theta})\|_{TX}^2\frac{d\theta}{2\pi}+O(1),
\end{align*}
where $\|\cdot\|_{TX}$ is a hermitian metric of $TX$.

By the Calculus Lemma  with $\gamma(r):=\exp((c_{f, \omega}+\epsilon)T_f(r))$ and the concavity of $\log$, we have that  
\begin{align*}
	\frac{1}{2}&\int_0^{2\pi}\log^+\|f'(re^{i\theta})\|_{TX}^2\frac{d\theta}{2\pi}\\
	&\leq \frac{1}{2}\int_0^{2\pi}\log(\|f'(re^{i\theta})\|_{TX}^2+1)\frac{d\theta}{2\pi}\\
	&\leq \frac{1}{2}\log(1+\int_0^{2\pi}\|f'(re^{i\theta})\|_{TX}^2\frac{d\theta}{2\pi})+O(1)\\
	&\leq \frac{1}{2}\log(1+\frac{1}{2\pi r}\frac{d}{dr}\int_{\D_r}\|f'(z)\|_{TX}^2\frac{\sqrt{-1}}{2}dz\wedge d\bar{z})+O(1)\\
	&\leq \frac{1}{2}\log(1+\frac{1}{2\pi r}\left(\int_{\D_r}\|f'(z)\|_{TX}^2\frac{\sqrt{-1}}{2}dz\wedge d\bar{z}\right)^{1+\epsilon}\gamma(r))+O(1)\ \|\quad (\mathrm{Lemma}\ \ref{lem:Cl})\\
	&=\frac{1}{2}\log(1+\frac{r^{\epsilon}}{2\pi}\left(\frac{d}{dr}\int_1^r\frac{dt}{t}\int_{\D_t}\|f'(z)\|_{TX}^2\frac{\sqrt{-1}}{2}dz\wedge d\bar{z}\right)^{1+\epsilon}\gamma(r))+O(1)\ \|\\
	&\leq \frac{1}{2}\log(1+\frac{r^{\epsilon}}{2\pi}T_f(r)^{(1+\epsilon)^2}\gamma(r)^{2+\epsilon})+O(1)\ \|\quad\ (\mathrm{Lemma}\ \ref{lem:Cl})\\
	&\leq \epsilon\log^+ r+(1+\epsilon)^2\log^+T_f(r)+(1+\epsilon)(c_{f, \omega}+\epsilon)T_f(r)+O(1)\ \|.
\end{align*}
Therefore, 
\begin{align*}
	K_j\leq & O(1)+\sum_{1\leq m\leq p}\sum_{1\leq l\leq k}S_f(r, l)+\sum_{p+1\leq m\leq k}\sum_{1\leq l\leq k}S_f(r, l-1)\\
	&+\sum_{p+1\leq m\leq k}\sum_{1\leq l\leq k}(\epsilon\log^+ r+(1+\epsilon)^2\log^+T_f(r)+(1+\epsilon)(c_{f, \omega}+\epsilon)T_f(r))\\
	&\leq O(\log^+r+\log^+T_f(r)+\sum_{1\leq m\leq k}\log^+T_{f_m}(r))+\frac{k^2(k+1)}{2}(1+\epsilon)(c_{f, \omega}+\epsilon)T_f(r)\ \|.
\end{align*}
Then we have that
\begin{align*}
	\int_0^{2\pi}&\log^+\frac{\|W_{\Lambda}(f)\|_{\wedge^nTX}\|\beta(f)\|^{k(k-1)/2}_L}{\prod_{i=1}^q\|Q_i(f)\|_{L_j}}(re^{i\theta})\frac{d\theta}{2\pi}=\sum_{j=1}^NK_j\\
	&\leq O(\log^+r+\log^+T_f(r)+\sum_{1\leq m\leq k}\log^+T_{f_m}(r))+\frac{Nk^2(k+1)}{2}(1+\epsilon)(c_{f, \omega}+\epsilon)T_f(r)\ \|.
\end{align*}

We denote by $\mathrm{ord}_z(Q_j\circ f)$ the order of zeros of $Q_j\circ f$ at the point of $z\in\D_R$, and by $\mathrm{ord}_z\beta(f)^{k(k-1)/2}W_{\Lambda}(f)$ the order of zero of $\beta(f)^{k(k-1)/2}W_{\Lambda}(f)$ at the point $z\in\D_R$. If $\mathrm{ord}_z(Q_j\circ f)\geq k+1$ for $z\in\D_R$, then 
$$\mathrm{ord}_z\beta(f)^{k(k-1)/2}W_{\Lambda}(f)\geq \mathrm{ord}_z(Q_j\circ f)-k.$$
By Lemma \ref{lem:1} and the First Main Theorem, we have that
\begin{align*}\sum_{1\leq i\leq q}&T(r, L_i)-\sum_iN_k(r, f^*(Q_i))-T_f(r, \wedge^nTX)-\frac{k(k-1)}{2}T_f(r, L)\\
	&\leq \int_0^{2\pi}\log^+\frac{\|W_{\Lambda}(f)\|_{\wedge^nTX}\|\beta(f)\|^{k(k-1)/2}_L}{\prod_{i=1}^q\|Q_i(f)\|_{L_j}}(re^{i\theta})\frac{d\theta}{2\pi}\leq S_f(r).
\end{align*}
This completes the proof of Theorem \ref{thm:main}.

\subsection{Proof of Theorem \ref{thm:2}} Let $[X_0: \cdots:X_k]$ be a homogeneous coordinate system of $\mathbb{P}^k$. Then by the same method used in Section 3 of Tiba \cite{Tib12}, one can construct the meromorphic partial projective connection $\Lambda=\{(\Lambda_j, U_j)\}_{0\leq j\leq k}$ on $\mathbb{P}^k$, where $U_j=\{[X_0:\cdots:X_k]\in\mathbb{P}^k|X_j\neq 0\}$. 
By Crammer's rule, the solutions are of the form $$\widetilde{\Gamma}_{i,j}^{\lambda}=\dfrac{\delta_{i,j}^{\lambda}}{\delta},$$ where $\delta=\det(\partial S_{\mu}/\partial X_{\lambda})_{0\leq\mu, \lambda\leq k}$, and $\delta_{i,j}^{\lambda}$ is the determinant replacing the column of index $\lambda$ in $\delta$ by the column $\dfrac{\partial^2 S_{\mu}}{\partial X_i\partial X_j}$ for $0\leq\mu\leq k$. Since $\partial S_{\mu}/\partial X_{\lambda}$ is a homogeneous polynomial of degree $d-1$, $\delta$ is a homogeneous polynomial of degree $(k+1)(d-1)$. This implies that the degree of the polar divisor of each $\Lambda_j$ is less than or equal to $(k+1)(d-1)$, hence $T(r, L)\leq(k+1)(d-1)T_f(r)$. In this case, $N=k+1$ and $T_{f_i/f_0}(r)\leq T_f(r)+O(1)$. Thus, Theorem \ref{thm:2} follows from Theorem \ref{thm:main}, the First Main Theorem and $K_{\mathbb{P}^k}=-(k+1)H$.

\section{Proof of Theorem \ref{thm:5}} 

To prove Theorem \ref{thm:5}, we need the following important proposition from \cite{DR08}: 

\begin{proposition}[\cite{DR08}]\label{pro:1} Let $\sigma_j, 1\leq j\leq q$ be the smooth hypersurfaces defined in Theorem \ref{thm:5}.
	Let $f$ and $g$ be two holomorphic maps from $\mathbb{D}_R$ into $\mathbb{P}^k$ with $c_f<\infty$ and $c_g<\infty$. Suppose that $f(z)=g(z)$ for all $z\in S$, where $$S:=\bigcup_{j=1}^q\left\{f^{-1}(\sigma_j)\cup g^{-1}(\sigma_j)\right\}.$$ 
	If $f\not\equiv g$, we have that
	$$N(r, S)\leq T_f(r)+T_g(r)+O(1).$$ 
\end{proposition}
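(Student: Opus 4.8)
The plan is to reduce the bound on $N(r,S)$ to a First Main Theorem inequality for the product map $f\times g$, exactly along the lines of the Dulock--Ru construction recorded in Lemma \ref{lem:DMR}.

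First I would form the product map $F:=f\times g:\D_R\to\mathbb{P}^k\times\mathbb{P}^k$ and apply Lemma \ref{lem:DMR}. Although that lemma is stated for maps from $\C$, its content is purely algebro-geometric: the diagonal $\Delta$ is cut out in $\mathbb{P}^k\times\mathbb{P}^k$ by the bilinear sections $s_{ij}=\pi_1^*X_i\otimes\pi_2^*X_j-\pi_1^*X_j\otimes\pi_2^*X_i$ of $H'=\pi_1^*H\otimes\pi_2^*H$, and since $f\not\equiv g$ the image $F(\D_R)$ is not contained in $\Delta$. Hence some $s:=s_{i_0j_0}$ is a holomorphic section of $H'$ that vanishes on $\Delta$ but does not vanish identically along $F(\D_R)$; in particular $s\circ F\not\equiv 0$, so its zero set is discrete in $\D_R$.

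Next I would exploit the sharing hypothesis. For every $z\in S$ we have $f(z)=g(z)$, so $F(z)\in\Delta\subset\mathrm{Supp}(s)$, and therefore each such $z$ is a zero of $s\circ F$. Counting without multiplicity this gives $n(r,S)\le n(r,(s\circ F)^{-1}(0))$ for every $0<r<R$, and after integration $N(r,S)\le N(r,F^*(s))$. I would then invoke the First Main Theorem on the disk for $F$ and the section $s$ of $H'$, which yields $N(r,F^*(s))\le T_F(r,H')+O(1)$; the finiteness of the growth indices $c_f,c_g$ ensures the relevant characteristic functions are well defined. Finally, since $c_1(H')=\pi_1^*c_1(H)+\pi_2^*c_1(H)$ with $\pi_1\circ F=f$ and $\pi_2\circ F=g$, one has $F^*c_1(H')=f^*c_1(H)+g^*c_1(H)$, so $T_F(r,H')=T_f(r,H)+T_g(r,H)=T_f(r)+T_g(r)$. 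Combining the displays gives $N(r,S)\le T_f(r)+T_g(r)+O(1)$, as claimed.

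The step that I expect to require the most care is the transfer of Lemma \ref{lem:DMR} from $\C$ to $\D_R$, together with the verification that the First Main Theorem and the additive decomposition of the characteristic function hold verbatim for holomorphic maps from a disk of finite growth index. Both are routine once one notes that the Jensen-type First Main Theorem and the functoriality of the Chern forms under $F^*$ are insensitive to the domain being $\D_R$ rather than $\C$, so that the only genuinely projective input is the vanishing of a bilinear $(1,1)$-section on the diagonal.
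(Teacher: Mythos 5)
Your proposal is correct and follows essentially the same route as the paper: both reduce the estimate to a section $s$ of $H'=\pi_1^*H\otimes\pi_2^*H$ vanishing on the diagonal but not identically along $(f\times g)(\D_R)$, bound $N(r,S)\leq N(r,(f\times g)^*(s))$, and conclude by the First Main Theorem together with $T_{f\times g}(r,H')\leq T_f(r)+T_g(r)+O(1)$. The only (inessential) difference is that you pick a single bilinear section $\pi_1^*X_{i_0}\otimes\pi_2^*X_{j_0}-\pi_1^*X_{j_0}\otimes\pi_2^*X_{i_0}$, whereas the paper, following the Dulock--Ru lemma, takes a linear combination $\sum_{m<l}a_{ml}(z_mw_l-z_lw_m)$.
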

\begin{proof} Let $\pi_1, \pi_2$ be the projective maps from $\mathbb{P}^k\times\mathbb{P}^k$ into the first $\mathbb{P}^k$ and the second one, respectively. If $f\not\equiv g$, then from the proof of Lemma \ref{lem:DMR} it is not hard to see that the diagonal $(f\times g)(S)$ is in $\mathrm{Supp}(s)$, but the image $(f\times g)(\mathbb{D}_R)$ is not contained in $\mathrm{Supp}(s)$, where $s$ is a section of $H':=\pi_1H\otimes\pi_2 H$ determined by a non-constant polynomial with complex coefficients $$P([z_0, \dots, z_k], [w_0, \dots,w_k])=\sum_{0\leq m<l\leq k}a_{ml}(z_mw_l-z_lw_m).$$ 
	Thus, we have that $$N(r, S)\leq N(r, (f\times g)^*(s)).$$ By the First Main Theorem for Line Bundles, $$N(r, (f\times g)^*(s))\leq T_{f\times g}(r, H')+O(1)\leq T_f(r)+T_g(r)+O(1).$$ Hence, we obtain the result.
\end{proof}

\subsection{Proof of Theorem \ref{thm:5}(\ref{thm:5a})}
The proof is by contradiction. Suppose that $f\not\equiv g$. Theorem \ref{thm:2} gives that 
\begin{eqnarray*}
	\left(q-\frac{k+1}{d}-\frac{1}{2d}(k-1)k(k+1)(d-1)\right)T_f(r) &\leq& d^{-1}\sum_{1\leq i\leq q}N_k(r, f^*(\sigma_i))+S_f(r), \\[0.6cm]
	\left(q-\frac{k+1}{d}-\frac{1}{2d}(k-1)k(k+1)(d-1)\right)T_g(r) & \leq& d^{-1}\sum_{1\leq i\leq q}N_k(r, g^*(\sigma_i))+S_g(r).
\end{eqnarray*}
Thus, 
\begin{eqnarray*}
	\left(q-\frac{k+1}{d}-\frac{1}{2d}(k-1)k(k+1)(d-1)\right)(T_f(r)+T_g(r)) \\[0.5cm]
	\leq d^{-1}\sum_{1\leq i\leq q}\left(N_k(r, f^*(\sigma_i))+N_k(r, g^*(\sigma_i))\right)+(S_f(r)+S_g(r)).
\end{eqnarray*}
From the condition $f(z)=g(z)$ on $S$, we have that
\begin{eqnarray*}
	\sum_{1\leq i\leq q}(n_k(r, f^*(\sigma_i))+n_k(r, g^*(\sigma_i))) &\leq & k\sum_{1\leq i\leq q}(n_1(r, f^*(\sigma_i))+n_1(r, g^*(\sigma_i)))\\
	&\leq & 2k n(r, S).
\end{eqnarray*}
Hence, 
\begin{eqnarray*}
	\left(q-\frac{k+1}{d}-\frac{1}{2d}(k-1)k(k+1)(d-1)\right)(T_f(r)+T_g(r)) \\[0.5cm]
	\leq \frac{2k}{d}N(r, S)+(S_f(r)+S_g(r)).
\end{eqnarray*}
Let $c_{f, g}=\max\{c_f, c_g\}$. Applying Proposition \ref{pro:1} gives 
\begin{align*}
	&\left(q-\frac{k+1}{d}-\frac{1}{2d}(k-1)k(k+1)(d-1)\right)(T_f(r)+T_g(r))\\
	&\leq \frac{2k}{d}(T_f(r)+T_g(r))+(S_f(r)+S_g(r))\\
	&\leq \frac{2k}{d}(T_f(r)+T_g(r))+\frac{k^2(k+1)^2}{2}(1+\epsilon)(c_{f, g}+\epsilon)(T_f(r)+T_g(r))\\
	&\quad \quad +O(\log^+r+\log^+T_f(r)+\log^+T_g(r))\quad \|.
\end{align*}
Thus $$q\leq \dfrac{3k+1}{d}+\dfrac{1}{2d}(k-1)k(k+1)(d-1)+\frac{k^2(k+1)^2}{2}c_{f,g},$$
which is in contradiction to  the assumption that $$q> \dfrac{3k+1}{d}+\dfrac{1}{2d}(k-1)k(k+1)(d-1)+\dfrac{k^2(k+1)^2}{2}\max\{c_f, c_g\}.$$ Hence $f\equiv g$. 
\subsection{Proof of Theorem \ref{thm:5}(\ref{thm:5b})}
We follow the method of Chen and Yan \cite{CY09}.  Suppose that the assertion does not hold. By changing indices if necessary, we may assume that 
\begin{eqnarray*} 
	\underbrace{\frac{\sigma_{1}(f)}{\sigma_{1}(g)}\equiv \cdots \equiv \frac{\sigma_{v_1}(f)}{\sigma_{v_1}(g)}}_{\mbox{Group 1}} \not\equiv\underbrace{\frac{\sigma_{v_1+1}(f)}{\sigma_{v_1+1}(g)} \equiv \cdots \equiv \frac{\sigma_{v_2}(f)}{\sigma_{v_2}(g)}}_{\mbox{Group 2}}
	\not\equiv \cdots \not\equiv \underbrace{\frac{\sigma_{v_{s-1}+1}(f)}{\sigma_{v_{s-1}+1}(g)} \equiv\cdots \equiv \frac{\sigma_{v_s}(f)}{\sigma_{v_s}(g)}}_{\mbox{Group s}},
\end{eqnarray*}
where $v_s=q$. 

Since the assertion (\ref{thm:5b})  does not hold, the number of elements of each group is at most $k$. For each $1\leq i\leq q$, we set 
$$ p(i)=\left\{
\begin{array}{lcl}
	i+k       &      & {i+k\leq q}\\
	i+k-q     &      & {i+k>q}
\end{array} \right. $$
and  $P_i=\sigma_i(f)\sigma_{p(i)}(g)-\sigma_i(g)\sigma_{p(i)}(f).$ Then $\dfrac{\sigma_i(f)}{\sigma_i(g)}$ and $\dfrac{\sigma_{p(i)}(f)}{\sigma_{p(i)}(g)}$ are from two distinct groups, and hence $P_i\not\equiv 0$ for every $1\leq i\leq q$. Since $f(z)=g(z)$ for all $z\in S$, note that $ n_f(r, \sigma_j)=n(r, 0, \sigma_j(f))$ for all $j=1,\dots, q$. For convenience, we denote by $n_E(r, 0, f)$ the number of zeros of $f$ in $\mathbb{D}_r\cap E$. Recall that $$S=\bigcup_{j=1}^q\left\{f^{-1}(\sigma_j)\cup g^{-1}(\sigma_j)\right\}.$$  Let $S_1=S\setminus\{\sigma_i(f)\sigma_{p(i)}(f)=0\}$, 
we consider the following cases:

\noindent\textbf{Case 1:} If $z\in S_1$, then $\sigma_j(f)(z)\neq0$ for $j=i, p(i)$, and hence
$$n_{S_1}(r, 0, P_i)\geq \sum_{j=1\ j\neq i, p(i)}^q \min\{1, n_{S_1}(r, 0, \sigma_j(f))\}.$$ 

\noindent\textbf{Case 2:} If $z\not\in S_1$, then $\sigma_j(f)(z)\neq 0$ for $j\neq i, p(i)$ and $\sigma_i(f)(z)\sigma_{p(i)}(f)(z)=0$, hence the form of $P_i$ gives that
\begin{eqnarray*}n_{S_1^c}(r, 0, P_i)
	&\geq &\min\{n_{S_1^c}(r, 0, \sigma_i(f)), n_{S_1^c}(r, 0,  \sigma_i(g))\}\\
	&\ \ &+\min\{n_{S_1^c}(r, 0, \sigma_{p(i)}(f)), n_{S_1^c}(r, 0,  \sigma_{p(i)}(g))\},
\end{eqnarray*}
where $S_1^c$ means the complement of $S_1$ in $\mathbb{D}_r$.

Combining Case 1, Case 2 and the fact that $n_f(r, \sigma_i)=n_{S_1}(r, 0, \sigma_i(f))+n_{S_1^c}(r, 0, \sigma_i(f))$, as well as $\min\{a, b\}\geq\min\{a, k\}+\min\{b, k\}-k$, we deduce that 
\begin{eqnarray*}n_{P_i}(r, 0)
	&\geq &\min\{n_f(r, \sigma_i), n_g(r, \sigma_i)\}+\min\{n_f(r, \sigma_{p(i)}), n_g(r, \sigma_{p(i)})\}\\
	&\ \ &+\sum_{j=1\ j\neq i, p(i)}^q \min\{1, n_f(r, \sigma_j)\}\\
	&\geq &\sum_{j=i, p(i)}\bigg(\min\{n_f(r, \sigma_j), k\}+\min\{n_g(r, \sigma_j), k\} 
	-k\min\{n_f(r, \sigma_j), 1\}\bigg)\\ 
	&\ \ &+\sum_{j=1\ j\neq i, p(i)}^q \min\{1, n_f(r, \sigma_j)\}.
\end{eqnarray*}
Integrating both sides of this inequality, we obtain that
\begin{eqnarray*}N_{P_i}(r, 0)
	&\geq &\sum_{j=i, p(i)}\bigg(N_k(r, f^*(\sigma_j))+N_k(r, g^*(\sigma_j)) -kN_1(r, f^*(\sigma_j))\bigg)\\
	&\ \ & +\sum_{j=1\ j\neq i, p(i)}^q N_1(r, f^*(\sigma_j)).
\end{eqnarray*}
Notice that $$\frac{P_i}{\sigma_i(g)\sigma_i(f)}=\frac{\sigma_{p(i)}(g)}{\sigma_i(g)}-\frac{\sigma_{p(i)}(f)}{\sigma_i(f)}$$ and that $\sigma_i(g),\ \sigma_i(f)$ are entire functions. Then, for $0<r<R$, $$N_{P_i}(r, 0)\leq T\left(r, \frac{\sigma_{p(i)}(g)}{\sigma_i(g)}-\frac{\sigma_{p(i)}(f)}{\sigma_i(f)}\right)\leq d(T_f(r)+T_g(r))+O(1).$$
Thus, we have that
\begin{eqnarray*}\sum_{i=1}^qd(T_f(r) &+&T_g(r))+O(1)
	\geq \sum_{i=1}^q\sum_{j=i, p(i)}\bigg(N_k(r, f^*(\sigma_j))+N_k(r, g^*(\sigma_j))\\ 
	&\ \ &-kN_1(r, f^*(\sigma_j))\bigg)+\sum_{i=1}^q\sum_{j=1\ j\neq i, p(i)}^q N_1(r, f^*(\sigma_j))\\
	&\geq& 2\sum_{j=1}^q\bigg(N_k(r, f^*(\sigma_j))+N_k(r, g^*(\sigma_j))\bigg)
	+(q-2(k+1))\sum_{j=1}^qN_1(r, f^*(\sigma_j)).
\end{eqnarray*}
By Theorem \ref{thm:2} and $q\geq 2(k+1)$, it follows that 
\begin{align*}
	&qd(T_f(r) +T_g(r))+2(S_f(r)+S_g(r))+O(1)\\
	&\geq 2d\left(q-\frac{k+1}{d}-\frac{1}{2d}(k-1)k(k+1)(d-1)\right)\left(T_f(r)+T_g(r)\right).
\end{align*}
Since $S_f(r)+S_g(r)\leq O(\log^+r+\log^+T_f(r)+\log^+T_g(r))+\dfrac{k^2(k+1)^2}{2}(1+\epsilon)(c_{f, g}+\epsilon)(T_f(r)+T_g(r))$, it follows that 
$$q\leq2\left(\frac{k+1}{d}+\frac{1}{2d}(k-1)k(k+1)(d-1)\right)+\frac{k^2(k+1)^2}{d}\max(c_{f},c_{g}),$$ which is in contradiction to the assumption in Theorem \ref{thm:5}(\ref{thm:5b}).
This completes the proof.


\end{document}